\newtheorem{theorem}{Theorem}
\newtheorem{lemma}{Lemma}
\newtheorem{proposition}{Proposition}
\newtheorem{corollary}{Corollary}
\renewcommand{\root}{\ensuremath{\bot}}
\newcommand{\n}{\tilde{n}}
\renewcommand{\P}{\mathcal{P}}
\newcommand{\ID}{\ensuremath{\mathrm{ID}}}
\newcommand{\id}{\mathrm{id}}
\newcommand{\rk}{\mathrm{rk}}
\newcommand{\true}{\mathrm{True}}
\newcommand{\false}{\mathrm{False}}
\newcommand{\N}{\mathbb{N}}
\newcommand{\Z}{\mathbb{Z}}
\newcommand{\m}{\overline{n}}
\newcommand{\g}{\tilde{g}}
\title{The rank of the endomorphism monoid of uniformly nested partition.}
\author{Ivan Yudin\thanks{The work is supported by the FCT Grant SFRH/BPD/31788/2006. The
financial support by CMUC and FCT gratefully acknowledged.}
}
\begin{document}
\maketitle
\section*{Introduction}
Let $S$ be a semigroup. We say that $X$ \emph{generates} $S$ if every element of
$S$ can be written as a product of elements in $X$. The least possible
cardinality $\rk\left( S \right)$  of a generating set of $S$ is called the
\emph{rank} of $S$. 

There is a variety of works where the ranks of transformation monoids of
different structures are explicitly computed. In particular, in~\cite{j} the
rank of endomorphism monoid of uniform partition was shown to be $ 4$. 
In this paper we consider the endomorphism monoid of uniformly nested partition
and show that its rank is $2k$, where $k$ is the depth of nesting. 
\section{Nested partitions}
First we fix definition related to the notion of tree. As usually \emph{a
(rooted) tree} $T$ is a
simply connected graph with a fixed vertex $\root$ called root. We will indicate by
$t\in T$ that $t$ is a vertex of $T$. For $t_1$, $t_2\in T$ we define
$d(t_1,t_2)$ to be the number of edges at the unique path from $t_1$ to
$t_2$. Note that $d$ gives a distance function on the set of vertices of
$T$. We define a level of $t\in T$ as a distance from $t$ to $\rm root$.  
Denote by $T_k$ the set of all vertices in $T$ of level $k$. 
A vertex $s\in T_{k+1}$ is called \emph{a child} of $t\in T_k$ if there is an
edge between $s$ and $t$. 

A \emph{nested partition} of a set $X$ is a collection of subsets $\left\{\,
P_t \,\middle|\, t\in T \right\}$ of $X$ parametrized by the vertices of a tree
$T$ such that
\begin{list}{\alph{enumi})}{\usecounter{enumi}}
	\item $P_{\root} = X$;
	\item for any non-leaf $t\in T$:
		$$
		P_t = \coprod_{s \mbox{ \scriptsize child of } t} P_s.
		$$
\end{list}
We say that a map $f\colon X \to Y$ \emph{respects} nested partitions $\left\{\,
Q_s \subset X\,\middle|\,  s\in S \right\}$ and $\left\{\, P_t\subset Y
\,\middle|\,  t\in T
\right\}$ if for every $s\in S$ exists (necessarily unique) $t\in T$ of the same
level as $s$ such that $f\left( Q_s \right) \subset T_t$. 

For every nested partition $\left\{\, P_t \,\middle|\, t\in T \right\}$ of
$X$ we define sets $X_k$ by 
$$
X_k = \left(\coprod_{\begin{smallmatrix}t\in T_k\\ t \mbox{
	non-leaf}\end{smallmatrix}} \left\{ \mbox{childs of $t$}
\right\}\right) \amalg \left(\coprod_{\begin{smallmatrix}t\in T_k\\ t
	\mbox{ leaf}\end{smallmatrix}} P_t\right)
$$
and maps $\rho_k\colon X_{k+1}\to X_k$ by 
\begin{align*}
	\rho_k(s)& = t, && \mbox{if $s\in T_{k+2}$ is a child of $t\in
	T_{k+1}$}\\
	\rho_k(x) & = t, && \mbox{if $t\in T_{k+1}$ is a leaf and $x\in P_t$.}
\end{align*}
Note that if $T$ is a tree of depth $k$, then $X_l= \varnothing$ for $l\ge k+1$. 

If $f\colon X\to Y$ respects partitions $\left\{\, Q_s\subset X \,\middle|\,
s\in
S
\right\}$, $\left\{\, P_t\subset Y \,\middle|\, t\in T \right\}$ define
$f_k\colon X_k\to Y_k$
by 
\begin{align*}
	f_k(s)& = t & \mbox{if $s\in T_{k+1}$ and $f\left( Q_s \right)\subset
	P_t$}\\
	f_k(x)& = f(y) & \mbox{if  $s\in S_k$ is a leaf and $x\in P_s$}.
\end{align*}
\begin{proposition}
	The diagrams
	$$
	\xymatrix{X_{k+1} \ar[r]^{\rho_k}\ar[d]_{f_{k+1}} & X_k
	\ar[d]^{f_k}\\ Y_{k+1} \ar[r]^{\rho_k}& Y_k}
	$$
	are commutative. 
\end{proposition}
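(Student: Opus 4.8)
The plan is to verify commutativity pointwise: I take an arbitrary element $z\in X_{k+1}$ and chase it along the two composites, checking that $f_k\bigl(\rho_k(z)\bigr)=\rho_k\bigl(f_{k+1}(z)\bigr)$. Writing $S$ for the tree indexing the partition $\{Q_s\}$ of $X$ and $T$ for the tree indexing $\{P_t\}$ of $Y$, the first coproduct in the definition of $X_{k+1}$ is canonically $S_{k+2}$ (every vertex of level $k+2$ is a child of a unique, necessarily non-leaf, vertex of level $k+1$), so
\[
X_{k+1}=S_{k+2}\amalg\Bigl(\coprod_{s\text{ a leaf of level }k+1}Q_s\Bigr),
\]
and similarly for $Y_{k+1}$. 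Thus there are exactly two cases: $z$ is a vertex of level $k+2$, or $z$ lies in the block of a leaf of level $k+1$.

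Before the case analysis I would isolate the two structural facts that do the real work. First, axiom b) gives \emph{nestedness}: if $s'$ is a child of $s$ then $Q_{s'}\subseteq Q_s$, and likewise $P_{t'}\subseteq P_{t''}$ whenever $t'$ is a child of $t''$. Second, the blocks attached to vertices of one fixed level are pairwise disjoint; this follows by a short induction on the level from axiom b), since distinct blocks at level $l$ either share a parent (disjoint by the coproduct) or sit inside disjoint parent blocks at level $l-1$. Combined with nonemptiness of blocks (which is exactly what underlies the uniqueness asserted in the \emph{respects}-condition), this says that a nonempty subset of $Y$ lies in \emph{at most one} block $P_t$ with $t$ of a prescribed level. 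This disjointness/uniqueness statement is the tool I will use to identify the two outputs.

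In the first case $z=s'\in S_{k+2}$. Along $f_k\circ\rho_k$: the map $\rho_k$ sends $s'$ to its parent $s\in S_{k+1}$, and $f_k(s)=t$, where $t\in T_{k+1}$ is the unique vertex with $f(Q_s)\subseteq P_t$. Along $\rho_k\circ f_{k+1}$: $f_{k+1}(s')=t'$ with $f(Q_{s'})\subseteq P_{t'}$ and $t'\in T_{k+2}$, and $\rho_k$ then sends $t'$ to its parent $t''\in T_{k+1}$. From $Q_{s'}\subseteq Q_s$ I get $f(Q_{s'})\subseteq P_t$, and from $P_{t'}\subseteq P_{t''}$ I get $f(Q_{s'})\subseteq P_{t''}$. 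Since $f(Q_{s'})$ is a nonempty subset of $Y$ contained in two blocks of level $k+1$, disjointness forces $t=t''$, which is the desired equality.

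In the second case $z=x\in Q_s$ for a leaf $s$ of level $k+1$. Here $\rho_k(x)=s$ and $f_k(s)=t$ is again the unique level-$(k+1)$ vertex with $f(Q_s)\subseteq P_t$, so $f_k\circ\rho_k$ outputs $t$. For $\rho_k\circ f_{k+1}$ the value $f_{k+1}(x)$ is the element of $Y_{k+1}$ representing the point $f(x)\in P_t$, and the computation splits once more according to the nature of $t$: if $t$ is a leaf then $f(x)\in P_t\subseteq Y_{k+1}$ and $\rho_k$ returns it to $t$; if $t$ is not a leaf then $f(x)$ lies in a unique child block $P_u$ with $u\in T_{k+2}$, so $f_{k+1}(x)=u$ and $\rho_k(u)$ is the parent $t$ of $u$. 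Either way $\rho_k\circ f_{k+1}$ also outputs $t$, completing the chase. I expect the genuine obstacle to be bookkeeping rather than depth: one must evaluate $f_k$ and $\rho_k$ on the correct coproduct component at every stage, and in the second case keep straight that the single value $t$ is produced in two different ways — directly as a block index on one side, and as the $\rho_k$-image of a child vertex on the other. Making the disjointness/uniqueness statement explicit at the outset is what keeps these identifications forced and honest.
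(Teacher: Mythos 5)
Your proof is correct and follows essentially the same route as the paper's: a pointwise chase over the two coproduct components of $X_{k+1}$, using nestedness of blocks together with disjointness of same-level blocks to force the two level-$(k+1)$ vertices to coincide in the vertex case, and direct evaluation in the leaf-block case. The only divergence is cosmetic — you make the disjointness/uniqueness lemma explicit up front and add a sub-case for a non-leaf target $t$ in the second case, which the paper's argument implicitly excludes.
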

\begin{proof}
	Suppose $s'\in T_{k+2}$ is a child of $s\in T_{k+1}$. Then
	$f_{k+1}(s')=t'$ for $t'$ such that $f\left( Q_s \right)\subset
	P_{t'}$. Now $Q_{s'}\subset Q_s$ and $P_{t'}\subset P_t$, where
	$t$ is the parent of $t'$. Since $f\left( Q_s \right)\subset
	P_{\tilde{t}}$ for a unique $\tilde{t}\in P_{k+1}$, the subsets
	$P_{r}$, $r\in P_{k+1}$ of $X$ are disjoint, and $f\left( Q_s
	\right)\cap P_t \supset P_{t'}\not= \emptyset$ we get that $f\left( Q_s
	\right)\subset P_t$
	and $f_k(s)=t$.

	Now suppose $x\in Q_s$, where $s\in S_k$ is a leaf. Then
	$f_{k+1}(x)= f(x) \in P_t$, where $t= f_k(s)$. Therefore
	$f_k\rho_k(x)= f_k(s) = t = \rho_k\left( f(x) \right) =
	\rho_kf_{k+1}(x)$. 
\end{proof}
Suppose we have nested partition $\left\{\, Q_s\subset X \,\middle|\, s\in S
\right\}$, $\left\{\, P_t\subset Y \,\middle|\, t\in T \right\}$ and maps
$f_k\colon X_k\to Y_k$ such that the diagrams
$$
	\xymatrix{X_{k+1} \ar[r]^{\rho_k}\ar[d]_{f_{k+1}} & X_k
	\ar[d]^{f_k}\\ Y_{k+1} \ar[r]^{\rho_k}& Y_k}
$$
are commutative. Define $f\colon X\to Y$ as follows. For every $x\in X$ there is
a unique leaf $s\in S$ such that $x\in Q_s$. Suppose $s\in S_k$. Set
$f(x) := f_k(x)$. 
\begin{proposition}
	The map  $f\colon X\to Y$ defined above respects partitions 
$\left\{\, Q_s\subset X \,\middle|\, s\in S
\right\}$ and $\left\{\, P_t\subset Y \,\middle|\, t\in T \right\}$.
\end{proposition}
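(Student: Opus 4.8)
The plan is to verify the defining property of a respecting map directly: for each vertex $s\in S$ I must produce a vertex $t\in T$ of the same level with $f(Q_s)\subset P_t$ and check that it is unique. Fix $s\in S$ of level $j\ge 1$ (the case $j=0$ is trivial, since then $Q_s=X$ and $P_{\root}=Y$). Viewing $s$ as a child of its parent, it is an element of the vertex summand $S_j$ of $X_{j-1}$, so I can form the candidate $t:=f_{j-1}(s)\in Y_{j-1}$. First I would check that $t$ is genuinely a vertex of $T$ at level $j$: every such $s$ lies in the image of $\rho_{j-1}$ (its children, or the points of $Q_s$ when $s$ is a leaf, map onto it), so writing $s=\rho_{j-1}(w)$ and using commutativity gives $t=f_{j-1}\rho_{j-1}(w)=\rho_{j-1}f_{j}(w)$; since $\rho_{j-1}$ takes values in the vertex summand $T_j$ of $Y_{j-1}$, this forces $t\in T_j$.

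Next I would prove $f(Q_s)\subset P_t$. Let $x\in Q_s$ and let $s'$ be the unique leaf of $S$, say at level $k\ge j$, with $x\in Q_{s'}$, so that $f(x)=f_k(x)$ by the definition of $f$. Iterating the given commutative squares yields $f_{j-1}\circ(\rho_{j-1}\cdots\rho_{k-1})=(\rho_{j-1}\cdots\rho_{k-1})\circ f_k$ as maps $X_k\to Y_{j-1}$. The composite $\rho_{j-1}\cdots\rho_{k-1}$ sends the point $x$ first to $s'$ and then up the tree to its level-$j$ ancestor, which is exactly $s$; hence the left-hand side at $x$ equals $f_{j-1}(s)=t$, so $(\rho_{j-1}\cdots\rho_{k-1})(f_k(x))=t$. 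Reading the same composite on the $Y$ side, it carries the point $f(x)$ to the level-$j$ ancestor of the leaf block of $T$ containing it, and equality with $t$ says precisely that this block is contained in $P_t$, i.e.\ $f(x)\in P_t$. As $x\in Q_s$ was arbitrary, $f(Q_s)\subset P_t$. Uniqueness is immediate: the blocks $P_t$ with $t$ ranging over level-$j$ vertices are pairwise disjoint, so at most one can contain the nonempty set $f(Q_s)$.

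The step I expect to be most delicate is ensuring that $f$ is well defined as a map into $Y$ at all, and in particular that the $Y$-side reading above is legitimate: this requires $f_k$ to carry the point summand of $X_k$ (the leaf blocks $Q_{s'}$) into the point summand of $Y_k$ rather than into the vertex summand $T_{k+1}$, since commutativity of the squares alone constrains only the behaviour of $\rho$-images and the points are fresh at each level. In the uniformly nested situation all leaves sit at the maximal level $k$, whence $X_k=X$, $Y_k=Y$ and $f=f_k$ is literally a map $X\to Y$, so the difficulty disappears; in the general case I would isolate this as a short lemma, proving by the same commutativity argument applied to a point that $f_k$ respects the point/vertex decomposition. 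The rest is pure bookkeeping: tracking indices through the iterated squares and handling the boundary case where $s$ is itself a leaf (so $k=j$), which the computation above already subsumes.
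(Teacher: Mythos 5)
Your argument is essentially the paper's own proof: both of you trace a point $x\in Q_s$ through the iterated commutative squares, sending it up to $s$ on the $X$-side and $f(x)$ up to the level-$j$ ancestor of its leaf block on the $Y$-side, and conclude $f(Q_s)\subset P_{f_{j-1}(s)}$ with uniqueness from disjointness of the level-$j$ blocks. The additional care you take --- verifying that $f_{j-1}(s)$ lands in the vertex summand of $Y_{j-1}$ and that $f_k$ carries the point summand to the point summand so that $f$ is well defined into $Y$ --- addresses real edge cases that the paper passes over silently, and your handling of them is correct.
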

\begin{proof}
	Let $x\in Q_s\subset X$. There is a unique leaf $s'$ of $S$ such that
	$x\in Q_{s'}$. It is clear that $s'$ is a descendant of $s$. Suppose
	$s'\in S_l$ and $s\in S_k$, $l\ge k$. Then $x\in X_l$, $s'\in
	X_{l-1}$ and $s\in X_{k-1}$. We consider the commutative diagram
	$$
	\xymatrix@C2cm{
	X_l \ar[r]^{\rho_{l-1}} \ar[d]_{f_l} &
	X_{l-1}\ar[r]^{\rho_{k-1}\circ\dots\circ \rho_{l-2}}
	\ar[d]_{f_{l-1}} & X_{k-1} \ar[d]^{f_{k-1}} \\
	Y_l \ar[r]^{\rho_{l-1}} & Y_{l-1}\ar[r]^{\rho_{k-1}\circ\dots\circ
	\rho_{l-2}} & Y_{k-1}.
	}
	$$
	From this diagram it follows that $f(x) = f_l(x) \in
	P_{f_{k-1}(s)}$. Since $x$ was an arbitrary element of $Q_s$ we get that
	$f\left( Q_s \right)\subset P_{f_{k-1}}(s)$.
\end{proof}
We say that a nested partition $\left\{\, P_t \,\middle|\, t\in T \right\}$
of $X$ is \emph{uniformly nested} if all leaves of $T$ have depth $k$ for some
$k\in {\mathbb N}$ and there are natural numbers $n_1$, $n_2$, \dots, $n_k$ such
that $\left|P_t\right|= n_j$ for all $t\in T_j$. 
We will call $\left( n_1,\dots,n_k \right)$ a \emph{type} of the uniformly
nested partition. We define the \emph{standard} uniformly nested partition
$I(\tilde{n})$ of
type $\n = (n_1,\dots,n_k)$ as follows
$$
I\left( \n \right)_j := \left[ 1..n_1 \right]\times \dots \times
\left[ 1..n_j \right]
$$
for $0\le j\le k$ and
$$
\rho_j \colon I\left( \n \right)_{j+1}\to I(\n)_j
$$
to be the projection on the first $j$ coordinates. 
It is clear that every uniformly nested partition of type $\n$ is isomorphic to
$I(\n)$. Since endomorphism monoids  of isomorphic objects are isomorphic we
will concentrate on the endomorphism monoid of $I\left( \n \right)$. We will
denote it by $\P\left( \n \right)$. For every $v\in I\left( \n
\right)_{j-1}$ and $f\in \P\left( \n \right)$ define $f\left[ v \right]\colon
[1..n_{j}] \to \left[ 1..n_j \right]$ by the requirement
$$
f_j\left( v,i \right) = \left( f_{j-1}\left( v \right), f[v]\left( i
\right) \right). 
$$
\begin{proposition}
	\label{prop:composition}
	For every $f$, $g\in \P\left( \n \right)$, $v\in I\left( \n
	\right)_{j-1}$ we have $\left( fg \right)[v] = f\left[ g_{j-1}\left(
	v \right) \right]\circ g[v]$. 
\end{proposition}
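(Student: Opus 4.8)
The plan is to reduce the identity to a direct computation by unwinding the definition of the coordinate map $f[v]$ and using that the passage $f\mapsto f_j$ is multiplicative at each level. First I would record the elementary fact that composition in $\P(\n)$ is computed levelwise, namely $(fg)_j = f_j\circ g_j$ for every $j$; this is immediate from the definition of the maps $f_j$ (equivalently, from the correspondence between endomorphisms of $I(\n)$ and families commuting with the $\rho_j$ set up in the two preceding propositions), since $f$ sends the block labelled $v$ into the block labelled $f_j(v)$. In particular both sides of the claimed equality are well defined: the first $j-1$ coordinates of $g_j(v,i)$ are exactly $g_{j-1}(v)\in I(\n)_{j-1}$, so $f\bigl[g_{j-1}(v)\bigr]$ is evaluated at a legitimate base point.

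The core of the argument is then to compute $(fg)_j(v,i)$ in two ways. On one hand, the defining property of the coordinate maps applied to the endomorphism $fg$ gives
$$ (fg)_j(v,i) = \bigl((fg)_{j-1}(v),\ (fg)[v](i)\bigr). $$
On the other hand, using $(fg)_j = f_j\circ g_j$ and the definition of $g[v]$,
$$ (fg)_j(v,i) = f_j\bigl(g_j(v,i)\bigr) = f_j\bigl(g_{j-1}(v),\, g[v](i)\bigr), $$
and applying the definition of the coordinate map of $f$ at the base point $g_{j-1}(v)$ yields
$$ f_j\bigl(g_{j-1}(v),\, g[v](i)\bigr) = \Bigl(f_{j-1}\bigl(g_{j-1}(v)\bigr),\ f\bigl[g_{j-1}(v)\bigr]\bigl(g[v](i)\bigr)\Bigr). $$
Comparing the two expressions, the first coordinates agree automatically, both being equal to $(fg)_{j-1}(v)=f_{j-1}\bigl(g_{j-1}(v)\bigr)$, and reading off the last coordinate gives $(fg)[v](i) = f\bigl[g_{j-1}(v)\bigr]\bigl(g[v](i)\bigr)$ for every $i\in[1..n_j]$. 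This is precisely the asserted equality $(fg)[v] = f\bigl[g_{j-1}(v)\bigr]\circ g[v]$.

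There is no genuine obstacle here; the only point requiring care is bookkeeping of the base point. After applying $g$, the relevant slice of $I(\n)_j$ sits over $g_{j-1}(v)$ rather than over $v$, so it is $f\bigl[g_{j-1}(v)\bigr]$, and not $f[v]$, that records the action of $f$ on the last coordinate. Once the levelwise multiplicativity $(fg)_j=f_j\circ g_j$ is established, everything else is forced by the definitions.
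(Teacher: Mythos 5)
Your proposal is correct and follows essentially the same route as the paper: both compute $(fg)_j(v,i)$ by unwinding $f_j\circ g_j$ through the defining identity $f_j(v,i)=(f_{j-1}(v),f[v](i))$ and compare with the defining identity for $(fg)[v]$. Your write-up merely makes explicit the levelwise multiplicativity $(fg)_j=f_j\circ g_j$ and the base-point bookkeeping, which the paper leaves implicit.
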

\begin{proof}
	We have
	\begin{align*}
		\left( fg \right)_j\left( v,i \right) & = f_j\left(g_j\left(
		v,i \right) 
		\right) \\
		&= f_j\left( g_{j-1}\left( v \right), g[v]\left( i
		\right) \right) \\
		&= \left( f_{j-1}\left( g_{j-1}\left( v \right) \right), f\left[
		g_{j-1}\left( v \right) \right]\left( g[v]\left( i
		\right) \right)\right) \\
		&= \left( \left( fg \right)_{j-1}\left( v \right), \left(
		f\left[ g_{j-1}\left( v \right) \right]g\left[ v \right]
		\right)\left( i \right) \right).
	\end{align*}
\end{proof}
We can recover $f\in \P\left( \n \right)$ from the collection
$$
\left\{\, f[v]\colon [1..n_j]\to \left[ 1..n_j \right] \,\middle|\,v\in I\left(
\n \right)_{j-1}\right\}. 
$$
In fact we can define
$f_1 := f[*]$, where $*$ is the unique element of $I\left( \n \right)_0$. Then
we proceed by induction and define $f_j\left( v,i \right) := \left(
f_{j-1}\left( v \right), f[v]\left( i \right)
\right)$. 

For every $g\colon \left[ 1..n_j \right]\to \left[ 1..n_j \right]$ and
$v\in I\left( \n \right)_{j-1}$ we define $\left[ f,v \right]\in \P\left( \n
\right)$ by
$$
[g,v][w] := 
\begin{cases}
	f, & v= w\\
	\id, & \mbox{otherwise.}
\end{cases}
$$
Note that for $v_1$, $v_2\in I\left( \n \right)_{j-1}$ the elements $\left[
g,v_1
\right]$ and $\left[ g,v_2 \right]$ commute. We define $t_j\left( f
\right)$ to be the product of elements $\left[ f\left[ v \right],v \right]$
where $v$ ranges over $I\left( \n \right)_{j-1}$. As all this elements pairwise
commute the order in such a product does not play any role. We have
$$
t_j\left( f \right)\left[ v \right] = 
\begin{cases}
	f[v], & v\in I\left( \n \right)_{j-1}\\
	\id, & \mbox{otherwise.}
\end{cases}
$$
\begin{proposition}
	\label{lemma2.3a}
	\label{prop:identity}
	Let $v\in I\left( \n \right)_{j-1}$ and $g\colon \left[ 1..n_j
	\right]\to \left[ 1..n_j \right]$. Then for every $0\le s \le j-1$ we
	have $\left[ g,v \right]_s = \id$. 
\end{proposition}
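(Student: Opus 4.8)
The plan is to induct on $s$, using the recursive characterization $f_s(w,i)=\left(f_{s-1}(w),f[w](i)\right)$ of the components of an endomorphism together with the observation that the local maps $[g,v][w]$ are trivial except when $w$ sits at the single level $j-1$.

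First I would dispose of the base case $s=0$. Since $I(\n)_0$ is the empty product $[1..n_1]\times\dots\times[1..n_0]$, it is a one-point set, and the only self-map of a singleton is the identity; hence $[g,v]_0=\id$.

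For the inductive step, fix $s$ with $1\le s\le j-1$ and assume $[g,v]_{s-1}=\id$. Writing $h=[g,v]$, the defining relation gives $h_s(w,i)=\left(h_{s-1}(w),h[w](i)\right)$ for $w\in I(\n)_{s-1}$ and $i\in[1..n_s]$. By the inductive hypothesis $h_{s-1}(w)=w$. The crucial point is that $w$ lies at level $s-1$, whereas $v$ lies at level $j-1$; since $s\le j-1$ we have $s-1<j-1$, so $w$ and $v$ sit at different levels and in particular $w\ne v$. By the definition of $[g,v]$ this forces $h[w]=\id$, whence $h_s(w,i)=(w,i)$. As $(w,i)$ was an arbitrary element of $I(\n)_s$, we conclude $h_s=\id$, completing the induction.

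The only delicate point is the level bookkeeping: the statement holds precisely because the index $s$ stays strictly below the level $j-1$ at which $g$ is inserted, so $h$ never gets a chance to act nontrivially. For $s=j$ the argument breaks down, as it must, since $[g,v]_j$ genuinely differs from the identity. No other obstacle arises; everything else is an unwinding of the recursive definition of $h_s$.
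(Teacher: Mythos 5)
Your proof is correct and follows essentially the same route as the paper: the paper observes that $[g,v]$ and the identity endomorphism $\ID$ have the same local maps $[w]$ at all levels below $j-1$ and invokes the fact that $f_s$ is recovered from those local maps, which is exactly the induction you carry out explicitly. Your version just unwinds that recovery principle step by step, with the same key point that $w\in I(\n)_{s-1}$ can never equal $v\in I(\n)_{j-1}$ when $s\le j-1$.
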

\begin{proof}
	Let $\ID$ be the identity map of $I\left( \n \right)$. Then $\ID_s = \id$
	for every $0\le s \le k$. Therefore for every $w\in I\left( \n \right)_s$ we
	have $\ID\left[ w \right] = \id\colon [1..n_j]\to \left[ 1..n_j
	\right]$. Since the component  $f_s$ of $f\in \P\left( \n \right)$ can
	be recovered from the maps $f[w]$, $w\in I\left( \n \right)_r$,
	$r\le s$, and $\left[ g,v \right][w] = \id = \ID\left[ w \right]$ for
	all $w\in I\left( \n \right)_r$, $r\le j-1$, we see that $\left[ g,v
	\right]_s = \ID_s = \id$ for all $s\le j-1$. 
\end{proof}
\begin{corollary}
	\label{cor:identity}
	For every $f\in \P\left( \n \right)$ and $1\le s<j\le k$ we have
	$t_j\left( f \right)_s  = \id$. 
\end{corollary}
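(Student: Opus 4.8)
The plan is to reduce the assertion about the product $t_j(f)$ to the single-factor statement already proved in Proposition~\ref{prop:identity}, by exploiting that passing to the level-$s$ component is multiplicative.

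First I would record that for each $0\le s\le k$ the assignment $f\mapsto f_s$ is a homomorphism of monoids from $\P\left( \n \right)$ to the monoid of self-maps of $I\left( \n \right)_s$. This is already contained in the opening line of the computation in the proof of Proposition~\ref{prop:composition}: there one reads $\left( fg \right)_s\left( v,i \right) = f_s\left( g_s\left( v,i \right) \right)$, i.e.\ $\left( fg \right)_s = f_s\circ g_s$, while $\ID_s = \id$ holds by the definition of the standard partition. Hence $\left( - \right)_s$ carries products to products and the identity to the identity.

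Next I would recall that, by construction, $t_j\left( f \right) = \prod_{v\in I\left( \n \right)_{j-1}}\left[ f[v],v \right]$ is a product of pairwise commuting elements. Applying the homomorphism $\left( - \right)_s$ factor by factor yields
$$
t_j\left( f \right)_s = \prod_{v\in I\left( \n \right)_{j-1}}\left[ f[v],v \right]_s .
$$
Finally, the hypothesis $1\le s<j$ means exactly $0\le s\le j-1$ (with $s\ge 1$), so Proposition~\ref{prop:identity} applies to every factor and gives $\left[ f[v],v \right]_s = \id$ for each $v\in I\left( \n \right)_{j-1}$. A product of identity maps is the identity, so $t_j\left( f \right)_s = \id$, as required.

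I do not expect any genuine obstacle here; the corollary is essentially a bookkeeping consequence of combining the previous two results. The only point that needs care is justifying that the level-$s$ component is multiplicative, which I would draw from the explicit identity $\left( fg \right)_s = f_s\circ g_s$ appearing inside the proof of Proposition~\ref{prop:composition} rather than from its formally stated conclusion about $\left( fg \right)[v]$.
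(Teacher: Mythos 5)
Your proposal is correct and is exactly the argument the paper intends (the corollary is stated without proof): apply the multiplicativity of the level-$s$ component, $\left( fg \right)_s = f_s\circ g_s$, to the commuting product defining $t_j\left( f \right)$ and invoke Proposition~\ref{prop:identity} on each factor. Nothing further is needed.
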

\begin{proposition}
	\label{prop:product}Let $g_1$, $g_2$ be endomorphisms of $\left[ 1..n_j
	\right]$ and $v\in I)\left( \n \right)_{j-1}$. Then $\left[ g_1,v
	\right]\left[ g_2,v \right] = \left[ g_1g_2,v\right],$. 
\end{proposition}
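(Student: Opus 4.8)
The plan is to prove the equality of the two endomorphisms $[g_1,v][g_2,v]$ and $[g_1g_2,v]$ by checking that their local actions coincide at every vertex, since (as noted in the excerpt) an element of $\P\left(\n\right)$ is completely determined by the collection of maps $h[w]$ as $w$ ranges over all the $I\left(\n\right)_r$. So I would fix an arbitrary $w\in I\left(\n\right)_r$ and compute $\left([g_1,v][g_2,v]\right)[w]$ using the composition formula of Proposition~\ref{prop:composition}, which gives
$$
\left([g_1,v][g_2,v]\right)[w] = [g_1,v]\bigl[\,[g_2,v]_{r}(w)\,\bigr]\circ [g_2,v][w].
$$

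The observation that makes everything collapse is that each $[g_i,v]$ preserves levels, so $[g_2,v]_r(w)$ again lies in $I\left(\n\right)_r$. Combined with the fact from Proposition~\ref{prop:identity} that $[g_2,v]_s=\id$ for all $s\le j-1$ (here $v\in I\left(\n\right)_{j-1}$), this controls the potentially awkward ``shift'' term $[g_2,v]_r(w)$ appearing inside $[g_1,v][\,\cdot\,]$.

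I would then split into cases according to the level $r$. If $r\neq j-1$, then $w$ and $v$ sit at different levels, so $w\neq v$ and $[g_2,v][w]=\id$; moreover $[g_2,v]_r(w)$ lies at level $r\neq j-1$ and is therefore also distinct from $v$, whence $[g_1,v]\bigl[[g_2,v]_r(w)\bigr]=\id$. The product is then $\id$, matching $[g_1g_2,v][w]=\id$. If instead $r=j-1$, then $[g_2,v]_{j-1}=\id$ by Proposition~\ref{prop:identity}, so $[g_2,v]_{j-1}(w)=w$ and the formula reduces to $[g_1,v][w]\circ[g_2,v][w]$; a final sub-case analysis on whether $w=v$ yields $g_1\circ g_2=g_1g_2$ when $w=v$ and $\id\circ\id=\id$ otherwise, in both cases agreeing with $[g_1g_2,v][w]$.

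I expect the main obstacle to be exactly this shift term: a priori the composition formula feeds $[g_2,v]_r(w)$ into $[g_1,v]$, and one might worry that it could accidentally land on $v$ and produce $g_1$ at an unintended vertex. Level-preservation of $[g_2,v]_r$ is precisely what rules this out, so once that point is recorded the remaining verifications are the routine case checks above. Having shown that the local actions coincide for every $w$, the recovery of an endomorphism from its local actions yields $[g_1,v][g_2,v]=[g_1g_2,v]$.
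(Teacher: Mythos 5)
Your argument is correct: reducing the identity to the equality of the local maps $\bigl([g_1,v][g_2,v]\bigr)[w]$ and $[g_1g_2,v][w]$ via Proposition~\ref{prop:composition}, and then handling the shift term $[g_2,v]_r(w)$ by level-preservation together with Proposition~\ref{prop:identity}, is exactly the ``straightforward computation'' the paper invokes without writing out. This is the same (and really the only) approach; you have simply supplied the case analysis the paper omits.
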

\begin{proof}
	Follows from the straightforward computation. 
\end{proof}
Now we will prove
\begin{proposition}
	\label{prop:decomposition}
	For every $f\in \P\left( \n \right)$ we have
	$$
	f = t_k\left( f \right)\circ \dots t_1\left( f \right). 
	$$
\end{proposition}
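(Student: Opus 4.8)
The plan is to verify the identity through the component maps $f[v]$. Since every endomorphism of $I(\n)$ is completely determined by the family $\{\,f[v]\mid v\in I(\n)_{j-1},\ 1\le j\le k\,\}$, it suffices to show that the right-hand side, which I denote $P$, satisfies $P[v]=f[v]$ for every $j$ and every $v\in I(\n)_{j-1}$.

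First I would promote Proposition~\ref{prop:composition} to a formula for an iterated product. Writing a product of endomorphisms as $a_m\circ\cdots\circ a_1$ and setting $v_0=v$, $v_i=(a_i\circ\cdots\circ a_1)_{j-1}(v)$, repeated use of Proposition~\ref{prop:composition} yields
\[
(a_m\circ\cdots\circ a_1)[v]=a_m[v_{m-1}]\circ\cdots\circ a_1[v_0],
\]
and each $v_i$ again lies in $I(\n)_{j-1}$, since the component $(\cdot)_{j-1}$ is an endofunction of $I(\n)_{j-1}$. Now I substitute for the $a_i$ the factors $t_l(f)$ and invoke the defining property $t_l(f)[w]=\id$ whenever $w$ is not of level $l-1$. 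As every argument $v_{i-1}$ sits at level $j-1$, the factor contributed by $t_l(f)$ is the identity unless $l=j$; hence exactly one factor survives and $P[v]=f\bigl[q_{j-1}(v)\bigr]$, where $q$ denotes the product of those factors that are applied before $t_j(f)$.

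The one genuine point — and the main obstacle — is to ensure that this surviving argument is $v$ itself, i.e.\ that $q_{j-1}=\id$. By Corollary~\ref{cor:identity} one has $t_l(f)_{j-1}=\id$ exactly when $l>j-1$, so a factor fixes level $j-1$ precisely when its index exceeds $j-1$; excluding the surviving factor $t_j(f)$ itself, this means every factor making up $q$ must have index larger than $j$. Since this has to hold simultaneously for all $j$, the factor $t_{j'}(f)$ must precede $t_j(f)$ whenever $j'>j$; that is, the factors have to be applied in order of decreasing index, with $t_k(f)$ acting first — which is exactly the order in which the product is to be read. With the composition arranged this way one gets $q_{j-1}(v)=v$, hence $P[v]=f[v]$ for all $v$, and therefore $P=f$.
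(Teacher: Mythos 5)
Your argument follows the same route as the paper --- expand $(\,\cdot\,)[v]$ by iterating Proposition~\ref{prop:composition}, note that at level $j$ every factor except $t_j(f)$ contributes an identity, and check that the argument handed to the surviving factor is $v$ itself --- but you treat the last point correctly, and it is precisely there that the paper's own proof goes wrong. The paper strips the low-index factors off the inside via
$$
\left( t_j\circ \dots \circ t_2 \right)\left[ \left( t_1 \right)_{j-1}\left( v \right)\right]\circ t_1\left[ v \right] = \left( t_j\circ \dots \circ t_2 \right)[v],
$$
which tacitly assumes $\left( t_1(f) \right)_{j-1}(v)=v$; in fact $\left( t_1(f) \right)_{j-1}\left( v_1,\dots,v_{j-1} \right)=\left( f_1\left( v_1 \right),v_2,\dots,v_{j-1} \right)$. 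With the convention used in Proposition~\ref{prop:composition} (the right-hand factor of a product acts first) the identity as printed already fails for $k=2$: $\left( t_2(f)\circ t_1(f) \right)_2\left( v,i \right)=\left( f_1(v),f\left[ f_1(v) \right](i) \right)$, whereas $f_2\left( v,i \right)=\left( f_1(v),f[v](i) \right)$. Your analysis --- the factors applied before $t_j(f)$ must fix level $j-1$, hence by Corollary~\ref{cor:identity} must have index greater than $j$, hence $t_k(f)$ must act first --- is exactly the missing step, and it proves the correct version of the statement, namely $f=t_1(f)\circ\dots\circ t_k(f)$ in standard notation (equivalently, the product as printed but read diagrammatically, leftmost factor acting first).

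Two small points. Corollary~\ref{cor:identity} gives only the implication $l>j-1\Rightarrow t_l(f)_{j-1}=\id$, not the ``exactly when'' you assert (the converse fails whenever the relevant $f[w]$ are themselves identities); this weakens your claim that the decreasing order is \emph{forced}, but you only use the true direction where it matters, so the proof of the identity itself is unaffected. Your observation that each intermediate argument $v_{i-1}$ remains in $I\left( \n \right)_{j-1}$, so that the iterated form of Proposition~\ref{prop:composition} keeps making sense, is needed and correctly supplied.
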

\begin{proof}
	We write $t_j$ instead of $t_j\left( f \right)$. 
	We have
	\begin{align*}
		\left( t_k\circ\dots t_1 \right)\left[ v \right] &= 
		t_k\left[ \left( t_{k-1\circ \dots \circ t_1}\left( v
		\right) \right) \right] \circ \left( t_{k-1}\circ\dots\circ t_1
		\right)[v]\\
		& = \left( t_{k-1}\circ \dots \circ t_1 \right) [v] \\
		& = \dots\\
		&= \left( t_j\circ \dots t_1 \right)[v]\\
		&= \left( t_j\circ \dots \circ t_2 \right)\left[ \left( t_1
		\right)_{j-1}\left( v \right)
		\right]\circ t_1\left[ v \right] \\
		&= \left( t_j\circ \dots \circ t_2 \right)[v]\\
		&= \dots \\
		&= t_j[v] = f[v]. 
	\end{align*}
\end{proof}
\section{Relative rank}
Let $S$ be a semigroup and $P\colon S \to \left\{ \true, \false \right\}$ a
predicate on $S$. We say that $P$ is \emph{primitive} if 
$$
\forall a,b\in S: P\left( ab \right) \Leftrightarrow P(a) \with P(b). 
$$
{\bf Examples} 1) Let $R$ be a commutative ring and $p$ a primitive ideal in
$R$. Then the predicate $P\left( x \right) := (x \notin p)$ is a primitive
predicate on the multiplicative semigroup of $R$. This example explains our
terminology. 

\noindent 2) Let $\mathcal{C}$ be a category and $X$ an object of
$\mathcal{C}$. Then the predicate
$$
\mbox{$f$ is an isomorphism}
$$
is a primitive predicate on $\mathcal{C}\left( X,X \right)$. 

\noindent 3) If $P_1$ and $P_2$ are primitive predicates then $P_1\&
P_2$ is primitive. 

	Suppose $P$ is a primitive predicate on $S$. We denote by $S_P$ the
	subset of $S$ of the elmements for which $P$ is true. Then $S_P$ is
	subsemigroup of $S$.  
\begin{proposition}
	\label{prop:relative}
	Let $P$ be a primitive predicate on $S$. Then
	$$
	\rk\left( S \right) = \rk\left( S_P \right) + \rk\left( S:S_P \right).
	$$
\end{proposition}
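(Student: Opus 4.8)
The plan is to prove the two inequalities $\rk(S)\le\rk(S_P)+\rk(S:S_P)$ and $\rk(S)\ge\rk(S_P)+\rk(S:S_P)$ separately, using the standard meaning of relative rank: $\rk(S:S_P)$ is the least cardinality of a subset $W\subseteq S$ with $\langle S_P\cup W\rangle=S$, where $\langle\cdot\rangle$ denotes the generated subsemigroup. The preliminary step on which everything rests is to upgrade the defining property of primitivity from binary to arbitrary finite products: a straightforward induction on $n$ gives
$$
P(a_1a_2\cdots a_n)\ \Longleftrightarrow\ P(a_1)\with P(a_2)\with\cdots\with P(a_n)
$$
for all $a_1,\dots,a_n\in S$. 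Equivalently, a product lies in $S_P$ precisely when each of its factors does, so that $S\setminus S_P$ is a completely prime ideal.

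For the inequality $\rk(S)\le\rk(S_P)+\rk(S:S_P)$ I would pick a generating set $A$ of $S_P$ with $|A|=\rk(S_P)$ and a set $W$ with $|W|=\rk(S:S_P)$ and $\langle S_P\cup W\rangle=S$. Since $\langle A\rangle=S_P$, one has $\langle A\cup W\rangle=\langle\langle A\rangle\cup W\rangle=\langle S_P\cup W\rangle=S$, so $A\cup W$ generates $S$ and $\rk(S)\le|A\cup W|\le|A|+|W|=\rk(S_P)+\rk(S:S_P)$.

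For the reverse inequality I would take an arbitrary generating set $G$ of $S$ and split it as the disjoint union $G=G_P\sqcup G_I$ with $G_P=G\cap S_P$. The extended equivalence above is exactly what shows that $G_P$ alone generates $S_P$: any $s\in S_P$ is a product $s=g_1\cdots g_n$ of elements of $G$, and since $P(s)$ holds the equivalence forces $P(g_i)$, i.e. $g_i\in G_P$, for every $i$; hence $|G_P|\ge\rk(S_P)$. On the other hand $S_P\cup G_I\supseteq G_P\cup G_I=G$, so $\langle S_P\cup G_I\rangle=S$ and $G_I$ is a relative generating set, whence $|G_I|\ge\rk(S:S_P)$. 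Because $G_P$ and $G_I$ are disjoint, $|G|=|G_P|+|G_I|\ge\rk(S_P)+\rk(S:S_P)$; as $G$ was an arbitrary generating set, $\rk(S)\ge\rk(S_P)+\rk(S:S_P)$.

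The only genuinely substantive point is the claim that $G_P$ generates $S_P$, which is where primitivity does all the work; everything else is bookkeeping with cardinalities of disjoint unions, and this remains valid for infinite ranks. The degenerate cases ($S_P$ empty, or a generator appearing as a one-term product) are covered by the base case of the induction, so I expect no real obstacle beyond stating that induction cleanly.
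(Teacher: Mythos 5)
Your proof is correct and follows essentially the same route as the paper: split an optimal generating set $X$ of $S$ into $X\cap S_P$ and its complement, and use primitivity (extended to $n$-fold products) to show the first part generates $S_P$ while the second is a relative generating set. The only differences are that you spell out the induction extending the binary equivalence to finite products and give the easy inequality $\rk(S)\le\rk(S_P)+\rk(S:S_P)$ explicitly, both of which the paper leaves implicit.
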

\begin{proof}
	It is obvious that $\rk\left( S \right)\le \rk\left( S_P \right) +
	\rk\left( S:S_P \right)$. Now let $X$ be a generating set of $X$ such
	that $\left| X\right|= \rk\left( S \right)$. Denote by $X_P$ the subset
	 $\left\{\, x\in X \,\middle|\,  P\left( x \right) \right\}$ of
	 $S_P$. Then $X_P$ generates $S_P$. In fact, let $s\in S_P$. Then
	 $s = x_1\dots x_m$ for some $x_i\in X$. Now
	 $$
	 P\left( s \right) \Leftrightarrow P\left( x_1\dots x_m \right)
	 \Leftrightarrow P\left( x_1 \right) \with \dots \with P\left( x_m
	 \right). 
	 $$
	 Thus $x_i\in X_P$ for every $1\le i\le m$. This shows that $s$ is an
	 element of subsemigroup generated by $X$. 

	 It is clear that $S_P\cup \left( X\setminus X_P \right)$ generates
	 $S$, since already its subset $X = X_P\cup \left( X\setminus X_P
	 \right)$ generates $S$. Therefore
	 $$
	 \rk\left( S \right) = \left|X\right| = \left|X\setminus X_P\right| +
	 \left|X_P\right| \ge \rk\left( S:S_P \right) + \rk\left( S_P
	 \right). 
	 $$
\end{proof}
For every $1\le j\le k$ we define the predicate $P_j$
on $\P\left( \n \right)$ by 
$$
P_j\left( f \right) = 
\begin{cases}
	\true, & \mbox{$f_j$ is invertible}\\
	\false, & \mbox{otherwise.} 
\end{cases}
$$
These predicates are primitive. We shall denote $\P\left( \n \right)_{P_j}$ by
$\P_j\left( \n \right)$. 
\begin{proposition}
	\label{prop:inclusion}
	We have $P_j\left( f \right)\Rightarrow P_{j-1}\left( f \right)$ and
	therefore $\P_j\left( \n \right)\subset \P_{j-1}\left( \n \right)$. 
\end{proposition}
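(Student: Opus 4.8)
The plan is to exploit the commuting square that \emph{defines} an endomorphism, namely $f_{j-1}\circ\rho_{j-1} = \rho_{j-1}\circ f_j$ (the first Proposition of Section~1, specialized to $X=Y$ and to the index $j-1$), together with the finiteness of each $I(\n)_j$. First I would record two elementary observations. Each set $I(\n)_j = [1..n_1]\times\dots\times[1..n_j]$ is finite, so for a self-map of $I(\n)_j$ the properties of being invertible, bijective, injective, and surjective all coincide. Moreover the projection $\rho_{j-1}\colon I(\n)_j\to I(\n)_{j-1}$ onto the first $j-1$ coordinates is surjective, since any $(a_1,\dots,a_{j-1})$ is the image of $(a_1,\dots,a_{j-1},1)$.

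Now assume $P_j(f)$, i.e. that $f_j$ is invertible, hence a bijection of $I(\n)_j$. I would feed this into the commuting square $f_{j-1}\circ\rho_{j-1} = \rho_{j-1}\circ f_j$. The right-hand side is the surjection $\rho_{j-1}$ precomposed with the bijection $f_j$, hence is surjective onto $I(\n)_{j-1}$; therefore so is the left-hand side $f_{j-1}\circ\rho_{j-1}$. As $f_{j-1}$ is the outer map of this surjective composite, its image already exhausts $I(\n)_{j-1}$, so $f_{j-1}$ is surjective. By the finiteness observation $f_{j-1}$ is then invertible, which is exactly $P_{j-1}(f)$. This establishes the implication $P_j(f)\Rightarrow P_{j-1}(f)$.

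Finally the asserted inclusion $\P_j(\n)\subset\P_{j-1}(\n)$ is immediate: by construction $\P_j(\n)$ and $\P_{j-1}(\n)$ are the truth-sets of the predicates $P_j$ and $P_{j-1}$ on $\P(\n)$, so the pointwise implication just proved yields the containment.

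The argument is short, and I expect no genuine obstacle; the only points requiring care are keeping the order of composition straight and remembering to invoke finiteness to upgrade surjectivity of $f_{j-1}$ to invertibility. One could equally argue via injectivity — if $f_j$ is a bijection then it carries the $n_j$-element fibre $\rho_{j-1}^{-1}(u)$ bijectively into the fibre over $f_{j-1}(u)$, so $f_{j-1}(u)=f_{j-1}(v)$ with $u\neq v$ would send two disjoint fibres of total size $2n_j$ into one fibre of size $n_j$, contradicting injectivity — but the surjectivity route is the cleaner of the two.
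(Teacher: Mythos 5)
Your argument is correct, but it runs in the opposite direction from the paper's. The paper argues by contraposition through injectivity: if $f_{j-1}$ is not invertible then, by finiteness, two distinct $v_1,v_2\in I\left(\n\right)_{j-1}$ have a common image $v$, and then $f_j$ carries the two disjoint fibres $\left\{v_1,v_2\right\}\times\left[1..n_j\right]$ (of total size $2n_j$) into the single fibre $\left\{v\right\}\times\left[1..n_j\right]$ (of size $n_j$), so $f_j$ cannot be injective --- this is precisely the \emph{injectivity route} you sketch in your closing remark. Your main argument instead goes forward through surjectivity: $f_j$ bijective makes $\rho_{j-1}\circ f_j=f_{j-1}\circ\rho_{j-1}$ surjective, hence $f_{j-1}$ is surjective, hence invertible by finiteness. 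Both proofs are sound and rest on the same two ingredients: the compatibility $\rho_{j-1}\circ f_j=f_{j-1}\circ\rho_{j-1}$ (equivalently the formula $f_j\left(v,i\right)=\left(f_{j-1}\left(v\right),f[v]\left(i\right)\right)$, which you should cite rather than the first Proposition of Section~1 if you want the cleanest justification that the square commutes for elements of $\P\left(\n\right)$) and the coincidence of injective, surjective and bijective for self-maps of a finite set. The surjectivity route needs only that $\rho_{j-1}$ is onto, while the paper's route uses the uniform fibre structure; neither gains extra generality here, so the difference is purely one of taste.
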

\begin{proof}
	Since $f_{j-1}$ is an endomorphism of finite set it is non-invertible if
	and only if there are two different elements $v_1$, $v_2$ of $I\left( \n \right)_{j-1}$ such that
	$f_{j-1}\left( v_1 \right) =f_{j-1}\left( v_2 \right)= v $. We consider
	restriction $\overline{f}_j$
	of $f_j$
	on $\left\{ v_1,v_2 \right\}\times \left[ 1..n_j \right]$. The image of
	$\overline{f}_j$ is a subset of $\left\{ v \right\}\times \left[ 1..n_j
	\right]$. Thus $\overline{f}_j$ is not injective and therefore
	$P_j\left( f \right) = \false$. 
\end{proof}
\begin{proposition}
	\label{prop:invertible}
	Suppose $v\in I\left( \n \right)_{j-1}$ and $g\colon [1..n_j]\to
	[1..n_j]$. 
	Then $\left[ g,v \right]_j$ is invertible if and only if $g$ is
	invertible. Moreover, if $g$ is invertible then $\left[ g,v \right]\in
	\P_k\left( \n \right)$. 
	
	Then $\left[ g,v \right]\in \P_k\left( \n \right)$ if and
	only if $g$ is invertible. 
\end{proposition}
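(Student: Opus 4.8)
The plan is to compute the components $\left[ g,v \right]_s$ explicitly for every $s\ge j$ and then read off invertibility fibre by fibre. First I would invoke Proposition~\ref{prop:identity}, which gives $\left[ g,v \right]_{j-1}=\id$, and combine it with the recursion $f_j(w,i)=\left( f_{j-1}(w),f[w](i) \right)$ that defines the components from the local maps. Since $\left[ g,v \right][w]=g$ when $w=v$ and $\left[ g,v \right][w]=\id$ otherwise, this yields
$$
\left[ g,v \right]_j(w,i)=
\begin{cases}
	\left( v,g(i) \right), & w=v\\
	\left( w,i \right), & w\ne v,
\end{cases}
$$
for $w\in I\left( \n \right)_{j-1}$ and $i\in[1..n_j]$. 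In other words, $\left[ g,v \right]_j$ preserves each fibre $\left\{ w \right\}\times[1..n_j]$ of the projection $\rho_{j-1}$, acting as $g$ on the fibre over $v$ and as the identity on every other fibre.

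From this I would deduce the first equivalence. Because $I\left( \n \right)_j$ is the disjoint union of these fibres and $\left[ g,v \right]_j$ carries each fibre into itself, the map $\left[ g,v \right]_j$ is a bijection of the finite set $I\left( \n \right)_j$ if and only if its restriction to every fibre is a bijection. The restrictions to the fibres over $w\ne v$ are identities, hence always bijective, while the restriction to the fibre over $v$ is $g$ under the identification of that fibre with $[1..n_j]$. Therefore $\left[ g,v \right]_j$ is invertible precisely when $g$ is invertible.

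For the remaining claims I would climb to higher levels by induction on $s$ from $j$ to $k$. The key observation is that $v$ sits at level $j-1$, so for any $w\in I\left( \n \right)_{s-1}$ with $s>j$ one has $w\ne v$ and hence $\left[ g,v \right][w]=\id$. Feeding this into the recursion gives
$$
\left[ g,v \right]_s(w,i)=\left( \left[ g,v \right]_{s-1}(w),i \right),
$$
so that $\left[ g,v \right]_s$ acts as $\left[ g,v \right]_j$ on the first $j$ coordinates and as the identity on the remaining coordinates $[1..n_{j+1}]\times\dots\times[1..n_s]$. Consequently $\left[ g,v \right]_s$ is invertible if and only if $\left[ g,v \right]_j$ is, that is, if and only if $g$ is invertible. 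Taking $s=k$ shows that $\left[ g,v \right]_k$ is invertible exactly when $g$ is, which by the definition of $\P_k\left( \n \right)$ is precisely the equivalence $\left[ g,v \right]\in\P_k\left( \n \right)\Leftrightarrow g$ invertible; in particular invertibility of $g$ forces $\left[ g,v \right]\in\P_k\left( \n \right)$.

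The only point demanding any care — the main obstacle, modest as it is — is the bookkeeping in the induction: one must apply the recursion $f_s(w,i)=\left( f_{s-1}(w),f[w](i) \right)$ with the correct local maps and confirm that the level of $v$ genuinely rules out $w=v$ at every level above $j-1$. Once the fibrewise decomposition $\left[ g,v \right]_s=\left[ g,v \right]_j\times\id$ is established, each invertibility assertion follows immediately.
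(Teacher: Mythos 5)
Your proposal is correct and follows essentially the same route as the paper: compute $\left[ g,v \right]_j$ explicitly from the recursion and the local maps $\left[ g,v \right][w]$, read off invertibility fibrewise, and then climb from level $j$ to level $k$ by induction using $\left[ g,v \right]_s(w,i)=\left( \left[ g,v \right]_{s-1}(w),i \right)$. The only (harmless) difference is that you carry the equivalence in both directions through the induction, which cleanly yields the full ``if and only if'' for membership in $\P_k\left( \n \right)$, whereas the paper runs the induction only under the assumption that $g$ is invertible.
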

\begin{proof}
	For $s\le j-1$ we know by Proposition~\ref{lemma2.3a} that $\left[ g,v
	\right]_s=\id$. Now
	\begin{equation}	
		\label{eq:comp}
		\left[ g,v \right]_j\left( w,i \right)  = \left(
		[g,v]_{j-1}\left( w
		\right), \left[ g,v \right]\left[ w \right]\left( i
		\right)\right) 
		\begin{cases}
			\left( w,i \right), & w\not=v\\
			\left( w,g\left( i \right) \right), & w=v. 
			\end{cases}
			\end{equation}
			This shows that $\left[ g,v \right]_j$ is invertible if
			and only if $g$ is invertible. 

	Now we suppose that $g$ is invertible. Then $\left[ g,v \right]_j$ is
	invertible. Assume we showed that $\left[ g,v \right]_r$ is invertible
	for all $j\le r\le s$. Then
	$$
	\left[ g,v \right]_{s+1}\left( w,i \right) = \left( \left[ g,v
	\right]_s\left( w \right), i \right). 
	$$
	As $\left[ g,v \right]_s$ is invertible by assumption, it follows that
	$\left[ g,v \right]_{s+1}$ is invertible as well. 
\end{proof}
\begin{theorem}
	\label{thm:step}
For any $1\le j\le k$ we have
$$
\rk\left( \P_{j-1}\left( \n \right):\P_j\left( \n \right) \right) = 1. 
$$
\end{theorem}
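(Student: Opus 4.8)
\emph{The plan.} I prove the two inequalities $\rk(\P_{j-1}(\n):\P_j(\n))\ge 1$ and $\le 1$ separately, and both will be witnessed by a single, carefully chosen element. Fix an idempotent $g_0\colon[1..n_j]\to[1..n_j]$ of rank $n_j-1$ (which exists because $n_j\ge 2$) and any $v_0\in I(\n)_{j-1}$, and put $a:=[g_0,v_0]$. For the lower bound, Proposition \ref{lemma2.3a} gives $a_{j-1}=\id$, so $a\in\P_{j-1}(\n)$, while Proposition \ref{prop:invertible} shows $a_j$ is non-invertible (as $g_0$ is), so $a\notin\P_j(\n)$. Hence $\P_j(\n)$ is a proper subsemigroup of $\P_{j-1}(\n)$ and cannot generate it on its own, which already gives $\rk(\P_{j-1}(\n):\P_j(\n))\ge 1$.

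For the upper bound I would show that $\P_j(\n)\cup\{a\}$ generates $\P_{j-1}(\n)$. The organizing tool is the layer decomposition $f=t_k(f)\circ\dots\circ t_1(f)$ of Proposition \ref{prop:decomposition}. Writing $f_j(v,i)=(f_{j-1}(v),f[v](i))$ one sees at once that $\P_j(\n)$ consists of those $f$ for which $f_{j-1}$ is invertible and every $f[v]$, $v\in I(\n)_{j-1}$, is invertible. Using this together with Corollary \ref{cor:identity} I would check that, for $f\in\P_{j-1}(\n)$, every layer $t_l(f)$ with $l\ne j$ already lies in $\P_j(\n)$: for $l>j$ because $t_l(f)_j=\id$, and for $l<j$ because each $t_l(f)[w]$ is then invertible, so $t_l(f)\in\P_k(\n)\subseteq\P_j(\n)$. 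Thus only the single layer $t_j(f)=\prod_{v\in I(\n)_{j-1}}[f[v],v]$, a product of commuting factors, must be produced from $\P_j(\n)\cup\{a\}$.

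The core is to realize, for every $v\in I(\n)_{j-1}$ and every $g\colon[1..n_j]\to[1..n_j]$, the element $[g,v]$ from $\P_j(\n)\cup\{a\}$. The tree-automorphism group acts transitively on $I(\n)_{j-1}$, so for each $v$ there is an invertible $\pi\in\P_k(\n)\subseteq\P_j(\n)$ with $\pi_{j-1}(v_0)=v$. A computation with Proposition \ref{prop:composition} then gives $\pi a\pi^{-1}=[\alpha g_0\alpha^{-1},v]$ with $\alpha:=\pi[v_0]$; in particular the conjugate is again supported at the single vertex $v$ and its local map is an idempotent of rank $n_j-1$. Combining it with the permutations $[\sigma,v]$, $\sigma\in S_{n_j}$, which all lie in $\P_j(\n)$, and invoking the classical fact that the symmetric group $S_n$ together with any idempotent of rank $n-1$ generates the full transformation monoid $T_n$ (i.e.\ $\rk(T_n:S_n)=1$), I obtain every $[g,v]$. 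Taking products over $v$ reconstructs $t_j(f)$, and the layer decomposition then reconstructs $f$; hence $\P_j(\n)\cup\{a\}$ generates $\P_{j-1}(\n)$ and $\rk(\P_{j-1}(\n):\P_j(\n))\le 1$.

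The step I expect to be the main obstacle is the conjugation identity $\pi a\pi^{-1}=[\alpha g_0\alpha^{-1},v]$: one must verify, via Propositions \ref{prop:composition} and \ref{lemma2.3a}, that all components $[w]$ with $w\ne v$ collapse to $\id$ (using that $\pi\pi^{-1}=\id$ holds componentwise) and that the layers below level $j$ vanish. A secondary point needing care is the transitivity claim—only the iterated-wreath-product automorphisms of the tree occur as $\pi_{j-1}$, but these already act transitively on level $j-1$, which is exactly what the argument uses.
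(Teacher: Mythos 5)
Your proof is correct and follows essentially the same route as the paper: proper inclusion for the lower bound, the layer decomposition $f=t_k(f)\circ\dots\circ t_1(f)$ to isolate the single problematic layer $t_j(f)$, and the classical fact that $S_{n_j}$ plus one defect-one map generates $T_{n_j}$, with conjugation by a tree automorphism moving the one adjoined idempotent to every vertex of level $j-1$. The only cosmetic difference is that the paper fixes the specific idempotent $\tau$ at $u=(1,\dots,1)$ and conjugates by an explicitly constructed involution $h$ with $h^2=\ID$, whereas you take an arbitrary defect-one idempotent and an arbitrary transitive conjugator $\pi$, verifying $\pi a\pi^{-1}=\left[\alpha g_0\alpha^{-1},v\right]$ directly via Proposition~\ref{prop:composition}.
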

\begin{proof}
	Since the inclusion of $\P_j\left( \n \right)$ in $\P_{j-1}\left( \n
	\right)$ is proper, the rank in question is at least $1$. Define
	$\tau\colon[1..n_j]\to \left[ 1..n_j \right]$ by 
	$$
	\tau\left( i \right) = 
	\begin{cases}
		2, & i= 1\\
		i ,& \mbox{otherwise.}
	\end{cases}
	$$
	Denote $\left( 1,\dots,1 \right)\in I\left( \n \right)_{j-1}$ by
	$u$. We claim that $\left[ \tau,u \right]$ generates $\P_{j-1}\left(
	\n \right)$ over $\P_j\left( \n \right)$. 

	Let $f\in \P_j\left( \n \right)$. By
	Proposition~\ref{prop:decomposition} we have
	$$
	f= t_k\left( f \right)\circ \dots \circ t_1\left( f \right),
	$$
	where 
	$$
	t_s\left( f \right) = \prod_{v\in I\left( \n \right)_{s-1}}\left[
	f\left[ v \right],v
	\right]. 
	$$
	Suppose $s\ge j+1$. Then by Corollary~\ref{cor:identity} $t_s\left( f
	\right)=\id$ and therefore $t_s\left( f \right)\in \P_j\left( \n
	\right)$. 

	Suppose $s\le j-1$ and $v\in I\left( \n \right)_{s-1}$. Since $f\in
	\P_{j-1}\left( \n \right)$ if follows that $\left[ f\left[ v
	\right],v \right]\in \P_{j-1}\left( \n \right)$. By
	Proposition~\ref{prop:inclusion} $\left[ f\left[ v \right], v \right]\in
	\P_s\left( \n \right)$ and therefore $\left[ f\left[ v \right],v
	\right]_s$ is invertible. By Proposition~\ref{prop:invertible}
	$f[v]$ is an automorphism of $\left[ 1..n_s \right]$. By the same
	proposition $\left[ f\left[ v \right],v \right]\in \P_k\left( \n
	\right)\subset \P_j\left( \n \right)$. Thus $t_s\left( f \right)\in
	\P_j\left( \n \right)$ for $s\le j-1$. 

	Let $v\in I\left( \n \right)_{j-1}$. Then $f\left[ v \right]\colon
	\left[ 1..n_j \right]\to \left[ 1..n_j \right]$ can be written as a
	product 
	$$
	g_1\tau g_2 \tau \dots \tau g_l
	$$
	for some $l\in \N$ and automorphisms $g_i$
	of $\left[ 1..n_j \right]$. By Proposition~\ref{prop:product} we have
	$$
	\left[ f\left[ v \right],v \right] = \left[ g_1,v \right]\left[ \tau,v
	\right]\dots \left[ \tau,v \right]\left[ g_l,v \right].
	$$
	By Proposition~\ref{prop:invertible} $[g_i,v]\in \P_j\left( \n
	\right)$ for $1\le i\le l$. Thus it is enough to show that $\left[
	\tau,v
	\right]$ belongs to a semigroup of  $\P_{j-1}\left( \n \right)$
	generated by $\P_j\left( \n \right)$ and $\left[ \tau,u \right]$.
	Suppose $v= \left( v_1,\dots,v_j \right)$, $1\le v_i\le n_i$. 
	Define
	$h\in \P\left( \n \right)$ by 
	$$
	h_s\left( w \right) := 
	\begin{cases}
		\left( v_1, \dots, v_s \right) & w = \left( 1,\dots,1 \right)\\
		\left( 1,\dots,1 \right) & w = \left( v_1,\dots,v_s \right)\\
		w & \mbox{otherwise}
	\end{cases}
	\mbox{if $s\le j-1$}
	$$
	$$
	h_s\left( w \right):= 
	\begin{cases}
		\left( v_1,\dots,v_{j-1},w_j,\dots,w_s \right) & \left(
		w_1,\dots,w_{j-1}
		\right) = u\\
		\left( 1,\dots,1,w_j,\dots,w_s \right) & \left(
		w_1,\dots,w_{j-1} \right) = v\\
		w & \mbox{otherwise}
	\end{cases} \mbox{if $s\ge j$.} 
	$$
	Then $h^2 = \ID$ and thus $h\in \P_{j}\left(\n  \right)$. Moreover
	$\left[ \tau,v \right] = h\left[ \tau,u \right]h$. This finishes the
	proof. 
\end{proof}
\begin{corollary}
	\label{cor:relative} We have 
	$$
	\rk\left( \P\left( \n \right) \right) = k + \rk\left( \P\left( \n
	\right):\P_k\left( \n \right) \right). 
	$$
\end{corollary}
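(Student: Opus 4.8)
The plan is to deduce the identity from one application of Proposition~\ref{prop:relative} in which the relative rank $\rk(\P(\n):\P_k(\n))$ enters as a genuine summand, and then to reduce everything to a single absolute-rank computation. Since $P_k$ is primitive and $\P_k(\n)=\P(\n)_{P_k}$ by definition, Proposition~\ref{prop:relative} applied to $S=\P(\n)$ and $P=P_k$ gives
\[
\rk(\P(\n)) = \rk(\P_k(\n)) + \rk(\P(\n):\P_k(\n)).
\]
Placing this beside the asserted identity and cancelling the common summand $\rk(\P(\n):\P_k(\n))$, I see that the Corollary is \emph{equivalent} to the single statement $\rk(\P_k(\n)) = k$. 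I would record this reduction first, precisely because it converts the claim about a relative rank into a claim about the absolute rank of a concrete group, and because it makes the comparison with the level-by-level telescoping of Proposition~\ref{prop:relative} (which only yields $\rk(\P(\n))=\rk(\P_k(\n))+k$) explicit rather than tacit.

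The second step is to evaluate $\rk(\P_k(\n))$. By Proposition~\ref{prop:invertible} an element of $\P_k(\n)$ is exactly an $f$ all of whose local maps $f[v]$ are bijections, so $\P_k(\n)$ is the full automorphism group of $I(\n)$, i.e.\ the iterated wreath product $S_{n_1}\wr\cdots\wr S_{n_k}$. For the lower bound I would use the $k$ parity maps $\epsilon_j(f):=\prod_{v\in I(\n)_{j-1}}\mathrm{sgn}(f[v])\in\{\pm1\}$. Proposition~\ref{prop:composition}, together with the fact that $f_{j-1}$ is a bijection for $f\in\P_k(\n)$, shows that each $\epsilon_j$ is a homomorphism; since for $n_j\ge2$ the signs at distinct levels may be prescribed independently, the combined map $(\epsilon_1,\dots,\epsilon_k)\colon\P_k(\n)\to\{\pm1\}^k$ is onto, which forces $\rk(\P_k(\n))\ge k$.

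The hard part will be the matching upper bound $\rk(\P_k(\n))\le k$: this is where the real content of the Corollary sits. Here I would try to produce one automorphism per level and verify that these $k$ elements generate the whole wreath product, reusing the factorisation $f[v]=g_1\tau g_2\cdots\tau g_l$ together with Propositions~\ref{prop:decomposition} and~\ref{prop:product} exactly as in the proof of Theorem~\ref{thm:step}, and conjugating by the relocation maps $h$ built there to carry a fixed local generator to an arbitrary node $v$. Levels with $n_j=1$ contribute nothing and would be discarded at the outset. I expect this generation step, and not the formal lower bound, to be the genuine obstacle, since it is exactly the point at which the permutation datum at every node must be synthesised from a bounded supply of generators.
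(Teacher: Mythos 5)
You have put your finger on a real defect in the paper, but your proposal does not close the gap it opens. The displayed formula in the Corollary is almost certainly a misprint: both the paper's one-line proof (``Apply Theorem~\ref{thm:step} and Proposition~\ref{prop:relative}'') and the way the Corollary is used in Theorem~\ref{main} (where $\rk(\P_k(\n))=k$ is fed into it to conclude $\rk(\P(\n))=2k$) show that the intended statement is
$$
\rk\left( \P\left( \n \right) \right) = k + \rk\left( \P_k\left( \n \right) \right),
$$
with the \emph{absolute} rank of $\P_k(\n)$ as the second summand, not the relative rank. The intended proof is exactly the telescoping you mention in passing and then set aside: each $P_j$ is a primitive predicate on $\P_{j-1}(\n)$ whose truth set inside $\P_{j-1}(\n)$ is $\P_j(\n)$ (by Proposition~\ref{prop:inclusion}), so Proposition~\ref{prop:relative} gives $\rk(\P_{j-1}(\n)) = \rk(\P_j(\n)) + \rk(\P_{j-1}(\n):\P_j(\n)) = \rk(\P_j(\n)) + 1$ by Theorem~\ref{thm:step}; summing over $j=1,\dots,k$ and using that $\P_0(\n)=\P(\n)$ yields the identity. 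So the equality you say the telescoping ``only yields'' is precisely the Corollary as the paper means it, and your own reduction shows the printed version is equivalent, via Proposition~\ref{prop:relative}, to the statement $\rk(\P_k(\n))=k$ --- which the paper does not prove until Section 4.

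The gap: having taken the printed statement at face value, you are obliged to prove $\rk(\P_k(\n))=k$, and you do not. Your lower bound via the $k$ parity homomorphisms is sound (it is essentially Proposition~\ref{prop:hom}). But the upper bound is left as a plan, and the plan as described would not work: the machinery of Theorem~\ref{thm:step} --- the factorisation $f[v]=g_1\tau g_2\cdots\tau g_l$ and conjugation by the relocation maps $h$ --- produces, for each level, one generator \emph{modulo} the subsemigroup $\P_j(\n)$; it says nothing about generating the group $\P_k(\n)\cong S_{n_k}\wr\cdots\wr S_{n_1}$ itself by only $k$ elements. Conjugating a fixed local generator to every node requires the conjugators to be available already, and counting the elements this scheme demands gives far more than $k$. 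The paper's actual argument (Theorem~\ref{t:rank}) rests on different ideas: generators $\g_j$ that couple a transposition at level $j$ with an odd-order cycle $\tau_{j+1}$ at level $j+1$, plus Lemmas~\ref{l:com} and~\ref{l:gen}, which recover $[g,i]$ and $\pi$ separately from the single element $[g,i]\pi$ using coprimality of orders. None of that is recoverable from the ingredients you cite. So as a proof of the literal statement your proposal is incomplete at its self-declared ``hard part''; as a proof of the intended statement, only your parenthetical telescoping remark is needed and everything else is superfluous.
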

\begin{proof}
	Apply Theorem~\ref{thm:step} and Proposition~\ref{prop:relative}. 
\end{proof}
\section{Generators for wreath product}

We will multiply permutations from left to right, thus
$$
\left( 1,2 \right)\left( 2,3 \right) = \left( 1,3,2 \right).
$$
Correspondingly, if $\pi\in S_m$ and $i\in \left\{ 1,\dots,m \right\}$, then the
result of application of $\pi$ to $i$ will be denoted by $i\pi$. 

Let $G$ be a group. We define a left action of
$S_m$ on $G^m$ by 
$$
\pi \left( g_1,g_2,\dots,g_m \right)= \left(
g_{1\pi},g_{2\pi},\dots,g_{m\pi}  \right).
$$
Then the multiplication in the wreath product $G\wr S_m = G^m \rtimes S_m$ is given
by 
$$
\left( h_1,\dots,h_m \right) \pi \left( g_1,g_2,\dots,g_m \right)\sigma = \left(
h_1g_{1\pi},h_2g_{2\pi},\dots,h_mg_{m\pi} \right)\pi\sigma.
$$ We will consider $G^m$ and $S_m$ as subgroups of $G\wr S_m$. 
Denote by $[-,i]$ the embedding of $G$ into the $j$-th component of
$G^m$.Then the set 
$$
\left\{\, [g,i] \,\middle|\, g\in G,\ i\in \left\{ 1,\dots,m \right\} \right\}
\cup S_m
$$
generates $G\wr S_m$.
In the following we will use that for a different $i$ and $j$ the elements
$[g,i]$ and $[h,j]$ of $G\wr S_m$ commute, and that for $\pi\in S_m$
$$
\pi[g,i] = [g,i\pi^{-1}]\pi.
$$
\begin{proposition}
	\label{prop:gen}
	Suppose $\left\{ g_1,\dots,g_k \right\}$ and $\left\{ \pi_1,\dots,\pi_l
	\right\}$ are generating sets of $G$ and $S_m$ respectively. Then for
	any multi-index $\left( i_1,\dots,i_k \right)$, $1\le i_t\le m$, the set
	$$
	X=\left\{\, [g_t,i_t] \,\middle|\, 1\le t\le k \right\} \cup \left\{
	\pi_1,\dots,\pi_l
	\right\}
	$$
	generates $G^m\wr S_m$. 
\end{proposition}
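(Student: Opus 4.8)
The plan is to reduce to the generating set $\{\,[g,i]\mid g\in G,\ i\in\{1,\dots,m\}\,\}\cup S_m$ of the wreath product $G\wr S_m$ recalled just above the statement, by showing that every element of that set lies in the subgroup $\langle X\rangle$ generated by $X$. First I would note that $S_m\subseteq\langle X\rangle$, since $\{\pi_1,\dots,\pi_l\}\subseteq X$ generates $S_m$ by hypothesis; in particular every permutation is available for conjugation.

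Next I would move the fixed index $i_t$ to an arbitrary position using the commutation rule $\pi[g,i]=[g,i\pi^{-1}]\pi$, which rearranges to $\pi[g,i]\pi^{-1}=[g,i\pi^{-1}]$. Because $S_m$ acts transitively on $\{1,\dots,m\}$, for any $j$ there is $\pi\in S_m$ with $i_t\pi^{-1}=j$, and then $[g_t,j]=\pi[g_t,i_t]\pi^{-1}\in\langle X\rangle$ for every $t$ and every $j$. Fixing $j$, I would then use that $g\mapsto[g,j]$ is an injective homomorphism, so $[g,j][h,j]=[gh,j]$ and $[g,j]^{-1}=[g^{-1},j]$; since $\{g_1,\dots,g_k\}$ generates $G$, every $[g,j]$ with $g\in G$ is a word in the $[g_t,j]^{\pm 1}$ and hence lies in $\langle X\rangle$. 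This produces all $[g,j]$ for $g\in G$ and $1\le j\le m$, which together with $S_m\subseteq\langle X\rangle$ recovers the full generating set, so $\langle X\rangle=G\wr S_m$.

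The only step requiring genuine attention is the index-shifting argument: one must check that conjugation by $S_m$ can carry the single index $i_t$ to any target $j$, which is exactly transitivity of $S_m$ on $\{1,\dots,m\}$ and holds since $S_m$ is the full symmetric group. Everything else — the conjugation identity and the fact that $[\,\cdot\,,j]$ is a homomorphism so that $[g,j][h,j]=[gh,j]$ — is a routine consequence of the multiplication formula for $G\wr S_m$ displayed above, so I would not belabour those computations.
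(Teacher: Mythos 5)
Your proposal is correct and follows essentially the same route as the paper: the paper also first puts $S_m$ into the generated subgroup and then conjugates $[g_t,i_t]$ by the transposition $(i_t,j)$ to obtain $[g_t,j]$ for all $j$, whence the copy $[G,j]$ and hence the standard generating set of $G\wr S_m$ lie in the subgroup. Your use of general transitivity of $S_m$ in place of an explicit transposition is an inessential variation.
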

\begin{proof}
	Denote by $H$ the subgroup of $G$ generated by $X$. Then $S_m\subset H$
	as $\left\{ \pi_1,\dots,\pi_l \right\}\subset X$ and $\left\{
	\pi_1,\dots,\pi_l
	\right\}$ generates $S_m$. Now for every $1\le t\le k$ and $1\le j\le m$
	we have
	$$
	(i_t,j)[g_t,i_t](i_t,j) = [g_t,j]\in H.
	$$
	Since the $\left\{\, [g_t,j] \,\middle|\, 1\le t\le k \right\}$
	generates subgroup $[G,j]$ of $G\wr S_m$ we get that
	$$
	\left\{\, [g,i] \,\middle|\, g\in G,\ i\in \left\{ 1,\dots,m \right\} \right\}
\cup S_m\subset H
	$$
	and therefore $H= G\wr S_m$.
\end{proof}

Now we prove two lemmas that show how the elements of the form $[g,i]$ and
$\pi$ can be recovered from the elements of the form $[g,i]\pi$.
\begin{lemma}
	\label{l:com}
	\label{coprime}
	Suppose $g\in G$ and $\pi\in S_m$ have coprime orders. Let $i\in \left\{
	1,\dots,m \right\}$ be such that $i\pi=i$. Then $[g,i]$ and $\pi$ are
	elements of the cyclic subgroup generated by $[g,i]\pi$. 
\end{lemma}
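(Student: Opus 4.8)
The plan is to place both factors inside the single cyclic subgroup $\langle x\rangle$, where $x:=[g,i]\pi$, by exhibiting each of $[g,i]$ and $\pi$ as a suitable power of $x$. Everything hinges on first obtaining a clean formula for the powers of $x$, after which the coprimality hypothesis does the separation.

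First I would prove, by induction on $n$, the identity
$$
x^n = [g^n,i]\,\pi^n.
$$
The base case $n=1$ is the definition of $x$. For the inductive step I would compute $x^{n+1}=x^n x = [g^n,i]\pi^n\,[g,i]\pi$ using the wreath-product multiplication rule $(h_1,\dots,h_m)\pi(g_1,\dots,g_m)\sigma=(h_1 g_{1\pi},\dots,h_m g_{m\pi})\pi\sigma$. Writing $[g^n,i]$ as the tuple with $g^n$ in position $i$ and $e$ elsewhere, the product of tuples has $j$-th entry $[g^n,i]_j\,[g,i]_{j\pi^n}$. Here the hypothesis $i\pi=i$ is exactly what makes the computation collapse: it gives $i\pi^n=i$, so the entry in position $i$ is $g^n\cdot g=g^{n+1}$, while for $j\neq i$ both factors are $e$, since $[g,i]_{j\pi^n}\neq e$ would force $j\pi^n=i$, hence $j=i$. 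Thus only the $i$-th slot accumulates powers of $g$ and no other slot ever acquires a copy of $g$, yielding $x^{n+1}=[g^{n+1},i]\pi^{n+1}$.

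With this formula in hand, let $p$ be the order of $g$ and $q$ the order of $\pi$, so that $\gcd(p,q)=1$. To recover $[g,i]$ I would choose, by the Chinese Remainder Theorem, an integer $n$ with $n\equiv 1\pmod p$ and $n\equiv 0\pmod q$; then $g^n=g$ and $\pi^n=e$, so $x^n=[g,i]$. Symmetrically, choosing $n$ with $n\equiv 0\pmod p$ and $n\equiv 1\pmod q$ gives $g^n=e$ and $\pi^n=\pi$, whence $x^n=\pi$. Both elements therefore lie in $\langle x\rangle$, as required.

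The only delicate point is the power formula, and specifically the role of the fixed-point condition $i\pi=i$: without it the permutation would move the loaded coordinate around, and the $G$-part of $x^n$ would be a spread-out product over several components rather than a single power $g^n$ sitting in position $i$, so the clean separation via the Chinese Remainder Theorem would break down. Once the formula is established, the coprimality of the orders is precisely what guarantees the two congruence systems are solvable, making the final step routine.
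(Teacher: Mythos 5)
Your proof is correct and follows essentially the same route as the paper's: both establish the power formula $([g,i]\pi)^n=[g^n,i]\pi^n$ from the fixed-point condition $i\pi=i$ (the paper phrases this as commutation of $[g,i]$ and $\pi$) and then choose exponents annihilating one factor while preserving the other. Your Chinese Remainder Theorem choice of $n$ is just a repackaging of the paper's B\'ezout exponents $pk$ and $ql$ with $pk+ql=1$.
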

\begin{proof}
	Note that $[g,i]$ and $\pi$ commute since $\pi [g,i] =
	[g,i\pi]\pi=[g,i]\pi$.
	Let $k$ and $l$ be the orders of $g$ and $\pi$ respectively. Since
	$k$ and $l$ are coprime
	there are $p$ and $q$ such that $pk + ql = 1$. Now $\left( [g,i]\pi
	\right)^{pk}= \left[ g^{pk},i \right]\pi^{1-ql} = \pi$ and $\left(
	[g,i]\pi
	\right)^{ql} = \left[ g^{1-pk},i \right]\pi^{ql} = [g,i]$.
\end{proof}
\begin{lemma}
	\label{strannaya}
	\label{l:gen}
	Let $g$ be an element of $G$ of odd order and $\sigma\in G$ of order
	$2$. Denote by $H$ the subgroup of $G\wr S_m$ generated by 
	\begin{align*}
	a = &	[\sigma,2]\left( 1,\dots,m \right)\\
		b = & [ g,3]\left( 1,2 \right).
	\end{align*}
	Then $[g,3]$, $[\sigma,1]$, $\left( 1,\dots,m \right)$ and $\left( 1,2
	\right)$  are elements of $H$.
	\end{lemma}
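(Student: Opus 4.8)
The plan is to treat the two generators $a$ and $b$ separately, extracting the required factors from each. Throughout I would use only elementary wreath-product manipulations together with Lemma~\ref{coprime}.

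First I would dispose of $b=[g,3](1,2)$. Here $g$ has odd order and the transposition $(1,2)$ has order $2$, so their orders are coprime; moreover $(1,2)$ fixes the index $3$. Thus $[g,3]$ and $(1,2)$ satisfy exactly the hypotheses of Lemma~\ref{coprime}, and both lie in the cyclic subgroup generated by $b$. This already places $[g,3]$ and $(1,2)$ in $H$.

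The substantial part is to recover $[\sigma,1]$ and the long cycle $c:=(1,\dots,m)$ from $a=[\sigma,2]\,c$, now that $(1,2)\in H$ is available. The obstacle is that the factor $[\sigma,2]$ is \emph{entangled} with the full cycle $c$, which has no fixed point, so Lemma~\ref{coprime} cannot be applied to $a$ directly. Instead I would isolate a single-component element by a telescoping computation. Using $c^{j}[\sigma,2]c^{-j}=[\sigma,2c^{-j}]$ and $c^{m}=\id$, one finds $a^{m}=\prod_{i=1}^{m}[\sigma,i]$, the product of the $\sigma$'s over all components. Next, a short permutation calculation gives $a(1,2)=[\sigma,2]\,(2,3,\dots,m)$, where the permutation part is now an $(m-1)$-cycle fixing the index $1$; raising this to the power $m-1$ and applying the same telescoping yields $\bigl(a(1,2)\bigr)^{m-1}=\prod_{i=2}^{m}[\sigma,i]$. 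Both products lie in $H$.

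Finally, since $\sigma$ has order $2$ each $[\sigma,i]$ is an involution, so multiplying the two products cancels every component except the first:
$$
[\sigma,1]=\Bigl(\prod_{i=1}^{m}[\sigma,i]\Bigr)\Bigl(\prod_{i=2}^{m}[\sigma,i]\Bigr)\in H .
$$
Conjugating by $(1,2)\in H$ gives $[\sigma,2]=(1,2)[\sigma,1](1,2)\in H$, and then $[\sigma,2]\,a=[\sigma,2][\sigma,2]c=c=(1,\dots,m)\in H$. Together with the output of the first step this produces all four required elements. I expect the main obstacle to be exactly the telescoping step that isolates $[\sigma,1]$: disentangling $[\sigma,2]$ from the fixed-point-free cycle is what forces the two auxiliary products $a^{m}$ and $\bigl(a(1,2)\bigr)^{m-1}$, and getting the index bookkeeping right—checking that $2c^{-j}$ runs over all of $\{1,\dots,m\}$ while the analogous indices for the $(m-1)$-cycle run over $\{2,\dots,m\}$—is the delicate point.
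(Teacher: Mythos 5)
Your proof is correct, and it follows the same skeleton as the paper's: first extract $[g,3]$ and $(1,2)$ from $b$ via Lemma~\ref{coprime}, then telescope the $m$-th power of $a$ and the $(m-1)$-st power of an auxiliary element whose permutation part is the cycle $(2,\dots,m)$ in order to isolate $[\sigma,1]$, and finally recover the long cycle. Where you genuinely differ is in the choice of auxiliary element. The paper uses $ab=[\sigma g,2](2,\dots,m)$, obtains $a^m(ab)^{m-1}=(\sigma,g,\dots,g)$, and must then raise this to the odd order $l$ of $g$, using $\sigma^l=\sigma$, to kill the $g$-entries (the paper's final display in that step in fact omits the exponent $l$, a typo). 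You instead use $a(1,2)=[\sigma,2](2,\dots,m)$, which is legitimate because $(1,2)\in H$ has already been secured from $b$; this removes $g$ from the computation entirely, so that $a^m\bigl(a(1,2)\bigr)^{m-1}=[\sigma,1]$ follows from $\sigma^2=e$ alone, with no second appeal to the parity of the order of $g$. Your recovery of $(1,\dots,m)$ via $[\sigma,2]=(1,2)[\sigma,1](1,2)$ and $[\sigma,2]a=(1,\dots,m)$ is likewise simpler than the paper's conjugation $a^{-1}[\sigma,1]a^{2}$. Both arguments are valid; yours is marginally cleaner and localizes the odd-order hypothesis to the single application of Lemma~\ref{coprime} to $b$. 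The index bookkeeping you flag as delicate is indeed the only point to check, and it works: with the paper's conventions $\pi[g,i]=[g,i\pi^{-1}]\pi$, the indices $2c^{-j}$ exhaust $\{1,\dots,m\}$ while the orbit of $2$ under $(2,\dots,m)$ is $\{2,\dots,m\}$, exactly as you state.
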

\begin{proof}
	Since the order of $g$ is odd, the order of $\left( 1,2 \right)$ is
	$2$,  and $[g,3]\left( 1,2
	\right)\in H$  it follows from Lemma~\ref{coprime} that 
	$\left[ g,3 \right]$ and $\left( 1,2 \right)$ are elements of $H$.

Now we  have that
\begin{align*}
a^m &=\left( \left[ \sigma,2 \right]\left( 1,\dots,m \right)\right)^m =
[\sigma,2][\sigma,3]\dots [\sigma,m][\sigma,1]\left( 1,\dots,m\right)^m\\ & =
\left( \sigma,\sigma,\dots,\sigma \right)\in H.
\end{align*}
Consider the product
\begin{align*}
ab& = [\sigma,2]\left( 1,\dots,m \right) 
[g,3]\left( 1,2 \right) = [\sigma,2][g,3\left( 1,\dots,m \right)^{-1}]\left(
2,\dots,m \right)\\ & = [\sigma g, 2] \left( 2,\dots,m \right).
\end{align*}
	Therefore
	\begin{align*}
		\left( ab \right)^{m-1} & = \left([\sigma g, 2] \left( 2,\dots,m
		\right)\right)^{m-1} = [\sigma g,2] [\sigma g,3] \dots [\sigma
		g,m]\left(
	2,\dots,m
	\right)^{m-1}\\ & = \left( e,\sigma g,\dots,\sigma g \right) \in H
\end{align*}
		and, using $\sigma^2=e$, 
$$
a^m\left( ab \right)^{m-1} = \left( \sigma,g,\dots,g \right)\in H.
$$
Let $l$ be the order of $g$. Then since $l$ is odd and the order of $\sigma$ is
$2$ we have $\sigma^l=\sigma$. Thus
$$
\left( a^m\left( ab \right)^{m-1} \right) = \left( \sigma,e,\dots,e \right)=
[\sigma,1] \in
H.
$$
Note that $a^{-1}= [\sigma,3]\left( 1,m,\dots,2 \right)$. 
Therefore
\begin{align*}
	a^{-1}[\sigma,1]a^2 &= [\sigma,3]\left( 1,m,\dots,2 \right)
	[\sigma,1][\sigma,2]\left( 1,\dots,m \right) [\sigma,1]\left(1,\dots,m
	\right) \\ &
	= [\sigma,3] [\sigma,2][\sigma,3]\left( 1,m,\dots,2 \right) \left(
	1,2,\dots,m
	\right) [\sigma,2] \left( 1,\dots,m \right)
\\&= [\sigma,2][\sigma,2]\left( 1,2,\dots,m \right)	
	= \left( 1,\dots,m \right)
	\in H.
\end{align*}
\end{proof} 
\section{Iterated wreath product}
In this section we identify $\P_k\left( \n \right)$ with an iterated wreath
product and show that its rank is $k$. 

Let $\Z_2={e,\sigma}$ be the cyclic group of order $2$. Denote by
$\varepsilon_j$ the parity homomorphism  from $S_{n_j}$ to $\Z_2$. Define
$\varepsilon\colon \P_k\left( \n \right)\to \Z^k_2$ by 
$$
\varepsilon\left( f \right) = \left( \varepsilon_1\left( f_1 \right), \dots,
\varepsilon_k\left( f_k \right) \right).
$$
\begin{proposition}
	\label{prop:hom}
	The homomorphism $\varepsilon$ is surjective and therefore $\rk\left(
	\P_k\left( \n \right) \right)\ge k$. 
\end{proposition}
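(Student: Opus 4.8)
The overall strategy is to prove surjectivity by producing $k$ elements of $\P_k\left(\n\right)$ whose images under $\varepsilon$ form a basis of $\Z_2^k$, regarded as a vector space over the two-element field. Granting surjectivity, the stated bound is formal: $\varepsilon$ is a surjective homomorphism of finite groups, so it carries any generating set of $\P_k\left(\n\right)$ onto a generating set of $\Z_2^k$; since $\Z_2^k$ is a $k$-dimensional $\Z_2$-vector space it admits no generating set of size smaller than $k$, and hence $\rk\left(\P_k\left(\n\right)\right)\ge \rk\left(\Z_2^k\right)=k$. For finite groups generation as a semigroup and as a group coincide, so this applies to the monoid rank as well.

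To build the elements I would, for each $1\le j\le k$, fix a transposition $\tau_j\in S_{n_j}$ — this uses $n_j\ge 2$, which must be assumed, since for $n_j=1$ the coordinate $\varepsilon_j$ is constant and the statement fails — together with an arbitrary vertex $v\in I\left(\n\right)_{j-1}$, and set $a_j:=\left[\tau_j,v\right]$. As $\tau_j$ is invertible, Proposition~\ref{prop:invertible} gives $a_j\in\P_k\left(\n\right)$. I would then read off $\varepsilon\left(a_j\right)$ level by level. By Proposition~\ref{prop:identity} one has $\left(a_j\right)_s=\id$ for all $s<j$, so the first $j-1$ coordinates of $\varepsilon\left(a_j\right)$ are trivial; and by the description of $\left[\tau_j,v\right]_j$ in Proposition~\ref{prop:invertible} the permutation $\left(a_j\right)_j$ acts as the transposition $\tau_j$ on the single block $\{v\}\times\left[1..n_j\right]$ and fixes everything else, hence is odd, so the $j$-th coordinate equals the generator of $\Z_2$.

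The step needing the most care is the behaviour of $\varepsilon\left(a_j\right)$ at the higher levels $s>j$. For these I would invoke the recursion $\left(a_j\right)_{s}\left(w,i\right)=\left(\left(a_j\right)_{s-1}\left(w\right),i\right)$ from Proposition~\ref{prop:invertible}, which presents $\left(a_j\right)_s$ as the permutation $\left(a_j\right)_{s-1}\times\id$ of $I\left(\n\right)_{s-1}\times\left[1..n_s\right]$; its parity is therefore the $n_s$-th power of the parity of $\left(a_j\right)_{s-1}$. The upshot is that the $k\times k$ matrix over the two-element field whose $\left(j,s\right)$ entry records whether $\varepsilon_s\left(a_j\right)$ is nontrivial is upper triangular — its entries below the diagonal vanish by the computation for $s<j$ — with $1$'s on the diagonal. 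Such a matrix is invertible regardless of its entries above the diagonal, so the vectors $\varepsilon\left(a_1\right),\dots,\varepsilon\left(a_k\right)$ span $\Z_2^k$ and $\varepsilon$ is onto. The only genuine subtlety is organising the parity bookkeeping so as to see that triangularity alone, and not the precise values $n_{j+1}\cdots n_s \bmod 2$, is what the argument needs.
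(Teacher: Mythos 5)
Your proposal is correct and follows essentially the same route as the paper: exhibit, for each $j$, an element $[\tau_j,v]$ whose image under $\varepsilon$ is trivial in the first $j-1$ coordinates (Proposition~\ref{prop:identity}) and equal to $\sigma$ in the $j$-th (a single transposed block is an odd permutation), and conclude by triangularity that these images generate $\Z_2^k$. The paper makes the same "Gauss elimination" observation with the specific choice $\tau_j=(1,2)$ and $v=(1,\dots,1)$; your additional remarks --- the explicit parity bookkeeping at levels $s>j$, the implicit hypothesis $n_j\ge 2$, and the deduction of the rank bound from surjectivity --- are all sound and merely fill in details the paper leaves tacit.
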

\begin{proof}
	Let $g_j=\left[ \left( 1,2 \right), u_j \right]$, where $u_j=\left(
	1,\dots,1
	\right)\in I\left( \n \right)_{j-1}$. Then by
	Proposition~\ref{prop:identity} for $s\le j-1$
	$$
	\varepsilon_s\left( (g_j)_s \right) = \varepsilon_s\left( \id \right)=
	e.
	$$
	Now by \eqref{eq:comp}
	$$
	\left( g_j \right)_j \left( w,i \right) = 
	\begin{cases}
		\left( w,i \right) & w\not= u_j\\
		\left( w,\left( 1,2 \right)\left( i \right) \right) & w= u_j.
	\end{cases}
	$$
	Thus $\left( g_j \right)_j$ swaps two elements $\left( u_j,1 
	\right)$  and $\left( u_j,2 \right)$ of $I\left( \n \right)_{j}$.
	Therefore $\varepsilon_j\left( \left( g_j \right)_j \right) = \sigma$.
	Since the elements 
	\begin{align*}
		\left( \sigma,*,\dots,* \right)\\
		\left( e,\sigma, *,\dots, * \right)\\
		\dots\\
		\left( e,\dots,e,\sigma \right)
	\end{align*}
	generate $\Z_2^k$ by Gauss elimination process, we see that
	$\varepsilon$ is surjective. 
\end{proof}

Let $\m = \left( n_2,\dots,n_k \right)$. We can identify $\P_k\left( \n
\right)$ with $\P_{k-1}\left( \m \right)\wr S_{n_1}$ as follows. Let $f\in
\P_k\left( \n \right)$ and $1\le i\le n_1$. Define $f\left( i \right)\in
\P_{k-1}\left( \m \right)$ from the equalities
$$
\left( f\left( i \right)_j\left( v \right), f_1\left( i \right) \right) =
f_{j+1}\left( v,i \right).
$$
Then 
$$
f\mapsto \left( f\left( 1 \right),\dots,f\left( n_1 \right) \right) f_1
$$
is an isomorphism from $\P_k\left( \n \right)$ to $\P_{k-1}\left( \m
\right)\wr S_{n_1}$. By iteration we get that 
$$
\P_k\left( \n \right)\cong S_{n_k} \wr S_{n_{k-1}} \wr \dots \wr S_{n_1}. 
$$
Let $\sigma\in S_{n_j}$ and $\left( v_1,\dots,v_{j_1} \right)\in I\left( \n
\right)_{j-1}$. Then upon this identification
$$
\left[ \sigma,u \right] = \left[ \left[ \left[ \sigma,v_{j-1} \right],\dots
\right], v_1 \right]. 
$$

For every $j$ we define an element $\tau_j$ of $S_{n_j}$ by
$$
\tau_j = 
\begin{cases}
	\left( 1,\dots,n_j \right) & \mbox{$n_j$ is odd}\\
	\left( 2,\dots,n_j \right) & \mbox{$n_j$ is even}.
\end{cases}
$$
Note that the order of $\tau_j$ is odd for every $j$ and that the
elements $(1,2)$, $\tau_j$ generate $S_{n_j}$. 

Define 
$$
g_j = \left[ \tau_{j+1},3 \right]\left( 1,2 \right) \in S_{n_k} \wr \dots \wr
S_{n_j},
$$
 and 
 $$
 \g_j = \left[ \left[ g_j,3 \right],\dots,3 \right]\in S_{n_k} \wr \dots \wr
S_{n_1},
 $$
 Define
$$
\g =\left[ \left[ \left[ \left( 1,2 \right),2 \right],\dots \right],2 \right]
 S_{n_k} \wr \dots \wr S_{n_1}.
$$
\begin{theorem}
	\label{t:rank}The set $X =\left\{ \g_1,\dots,\g_{k-1}, \g \right\}$
	generates $G=S_{n_k} \wr \dots \wr S_{n_1}$ and therefore $\rk\left(
	G \right)\le k$. 	
\end{theorem}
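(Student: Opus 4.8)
The plan is to induct on the depth $k$, exploiting the recursive structure $G \cong \left( S_{n_k}\wr\dots\wr S_{n_2} \right)\wr S_{n_1}$. Write $G' = S_{n_k}\wr\dots\wr S_{n_2}$, the iterated wreath product attached to the shortened type $\m = (n_2,\dots,n_k)$, so that $G = G'\wr S_{n_1}$ with base group $(G')^{n_1}$. By Proposition~\ref{prop:gen} it suffices to show that the subgroup $H = \langle X\rangle$ contains a generating set of $S_{n_1}$ together with coordinate embeddings of the elements of a generating set of $G'$. The inductive hypothesis is that the analogously defined set $X' = \{\g_2',\dots,\g_{k-1}',\g'\}$, attached to $\m$, generates $G'$.

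First I would carry out the bookkeeping that expresses $X$ inside $G'\wr S_{n_1}$. Unwinding the iterated embeddings in the definitions, each $\g_j$ ($2\le j\le k-1$) equals $[\g_j',3]$, the embedding of the depth-$(k-1)$ generator $\g_j'$ into the third coordinate of $(G')^{n_1}$; the generator $\g$ equals $[\g',2]$, carrying in addition the top $n_1$-cycle; and $\g_1 = [\tau_2,3](1,2)$ is the unique generator with nontrivial image in $S_{n_1}$. Thus $X\setminus\{\g_1,\g\}$ is exactly the family of coordinate-$3$ embeddings of $X'\setminus\{\g'\}$.

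The key step is to apply Lemma~\ref{l:gen} at the top level with $b = \g_1 = [\tau_2,3](1,2)$ and $a = \g$. The hypotheses hold because $\tau_2$ has odd order while the element embedded by $\g$ into coordinate $2$ has order $2$; the lemma therefore places $(1,\dots,n_1)$ and $(1,2)$, hence all of $S_{n_1}$, inside $H$, and simultaneously produces the single-coordinate embedding $[\g',1]$. Combined with the embeddings $[\g_2',3],\dots,[\g_{k-1}',3]$ already in $X$, this exhibits coordinate embeddings of the entire generating set $X'$ of $G'$; Proposition~\ref{prop:gen} then gives $H = G'\wr S_{n_1} = G$, and since $|X| = k$ we conclude $\rk(G)\le k$ (equality following from Proposition~\ref{prop:hom}).

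I expect the main obstacle to lie in the base case $k = 2$ and in the verification of the hypotheses of Lemma~\ref{l:gen}. For $k = 2$ the set $X' = \{\g'\} = \{(1,2)\}$ does not by itself generate $S_{n_2}$, so there one must also use the \emph{second} embedding $[\tau_2,3]$ that Lemma~\ref{l:gen} delivers: since $(1,2)$ and $\tau_2$ together generate $S_{n_2}$, Proposition~\ref{prop:gen} still applies. Beyond this, the delicate points are checking that the recursive identification really sends $\g_j$ to $[\g_j',3]$ and $\g$ to $[\g',2]$ together with the top cycle, and that the coordinates $1,2,3$ invoked by Lemma~\ref{l:gen} are distinct and are fixed or moved as the lemma requires — which forces the mild hypothesis $n_1\ge 3$, with the small cases handled separately.
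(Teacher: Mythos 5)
Your strategy is essentially the paper's: the paper's closing phrase ``by iteration of Proposition~\ref{prop:gen}'' is exactly the induction on $k$ that you make explicit, and your key step --- applying Lemma~\ref{l:gen} at the top level to $a=\g$ and $b=\g_1$ to obtain $(1,\dots,n_1)$, $(1,2)$ and a coordinate embedding into the base group $(G')^{n_1}$ --- is the same as the paper's. You also correctly flag two things the paper glosses over: the hidden hypotheses $n_j\ge 3$ (the constructions use coordinate $3$ and the lemma needs three distinct coordinates), and the fact that $\g$ must carry the top cycle $(1,\dots,n_1)$ for Lemma~\ref{l:gen} to apply at all (the paper's displayed definition of $\g$ omits it, apparently a typo).

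There is, however, one genuine soft spot in your inductive step. Lemma~\ref{l:gen} puts into $H$ the element $[\sigma,1]$ where $\sigma=\left[\left[(1,2),2\right],\dots,2\right]\in G'$ is the \emph{order-two part} of $\g$; this is not $[\g',1]$, because the generator $\g'$ of the shorter type must itself carry the top cycle $(1,\dots,n_2)$ of $G'$ (for the same reason $\g$ must carry $(1,\dots,n_1)$). So the set of elements of $G'$ you can actually embed is $\{\g_1',\dots,\g_{k-2}',\sigma,\tau_2\}$, not $X'$, and the inductive hypothesis ``$X'$ generates $G'$'' does not apply to it directly. The repair is the issue you already noticed in the base case $k=2$, propagated to every level: use Lemma~\ref{l:com} on $\g_1'$ to extract $(1,2)\in S_{n_2}$, note that $(1,2)$ and $\tau_2$ generate $S_{n_2}\ni(1,\dots,n_2)$, hence $\g'=\sigma\cdot(1,\dots,n_2)$ lies in the subgroup generated by your embedded elements, and only then invoke the inductive hypothesis. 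This is precisely why the paper applies Lemma~\ref{l:com} to \emph{every} $\g_j$ first, extracting all the atomic pieces $\left[(1,2),u_j\right]$ and $\left[\tau_{j+1},u_{j+1}\right]$ before iterating Proposition~\ref{prop:gen}; your write-up drops Lemma~\ref{l:com} entirely, and without it the induction does not quite close.
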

\begin{proof}
	Let $H$ be a subgroup of $G$ generated by $X$. 
	Denote by $u_j$ the element $\left( 3,\dots,3 \right)$ of $I\left( \n
	\right)_{j-1}$. 
	
	As $\g_j\in H$, it
	follows from Lemma~\ref{l:com} that for $1\le j\le k-1$
	$$
\left[ \left( 1,2 \right), u_j \right]\in H
	$$
	and for $2\le j\le k$ 
	$$
	\left[ \tau_j,u_j \right]\in H.
	$$
	Now we apply Lemma~\ref{l:gen}
	to $\g_1$ and $\g$. We get that $\left( 1,\dots,n_1 \right)$ is an
	element of $H$. By iteration of Proposition~\ref{prop:gen} we get that
	these elements generate the whole group $G$. 
\end{proof}
\begin{theorem}
	\label{main}
	We have $\rk\left( \P\left( \n \right) \right) =2k$. 
\end{theorem}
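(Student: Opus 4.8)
The plan is purely to assemble the two halves already built: the relative-rank reduction and the wreath-product computation; once these are in hand the statement is a one-line arithmetic. First I would peel off the non-invertible layers one at a time. For each $1\le j\le k$ the predicate $P_j$ restricts to a primitive predicate on $\P_{j-1}\left(\n\right)$ (a biconditional valid on all of $\P\left(\n\right)$ is a fortiori valid on a subsemigroup), and by Proposition~\ref{prop:inclusion} its truth set inside $\P_{j-1}\left(\n\right)$ is exactly $\P_j\left(\n\right)$. Hence Proposition~\ref{prop:relative} gives $\rk\left(\P_{j-1}\left(\n\right)\right)=\rk\left(\P_j\left(\n\right)\right)+\rk\left(\P_{j-1}\left(\n\right):\P_j\left(\n\right)\right)$, and Theorem~\ref{thm:step} pins each relative term to $1$. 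Telescoping along $\P\left(\n\right)\supseteq\P_1\left(\n\right)\supseteq\dots\supseteq\P_k\left(\n\right)$ (this is Corollary~\ref{cor:relative}) leaves
$$\rk\left(\P\left(\n\right)\right)=\rk\left(\P_k\left(\n\right)\right)+k,$$
so that everything reduces to the claim $\rk\left(\P_k\left(\n\right)\right)=k$.

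For that I would use the identification $\P_k\left(\n\right)\cong S_{n_k}\wr\dots\wr S_{n_1}$ and the two matching bounds. The lower bound is Proposition~\ref{prop:hom}: the parity map $\varepsilon$ is a surjective homomorphism onto $\Z_2^k$, and since $\Z_2^k$ cannot be generated by fewer than $k$ elements, neither can $\P_k\left(\n\right)$. The upper bound is Theorem~\ref{t:rank}, where the explicit set $\left\{\g_1,\dots,\g_{k-1},\g\right\}$ of cardinality $k$ is shown to generate the group. Combining the two gives $\rk\left(\P_k\left(\n\right)\right)=k$, and substituting into the displayed identity yields $\rk\left(\P\left(\n\right)\right)=k+k=2k$.

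The assembly itself is routine; all the weight has already been carried by the cited results, so I expect no real obstacle in the final step beyond one bookkeeping check, namely that the chain of reductions genuinely terminates at $\P_k\left(\n\right)$ rather than stopping short, which is immediate from the stacking $\P_k\left(\n\right)\subseteq\dots\subseteq\P_1\left(\n\right)$ of Proposition~\ref{prop:inclusion}. The two genuinely substantial steps lie upstream: Theorem~\ref{thm:step}, where the relative rank of each layer is forced down to exactly $1$ by producing every layer from the single transposition-type element $\left[\tau,u\right]$ via the conjugation $\left[\tau,v\right]=h\left[\tau,u\right]h$, and Theorem~\ref{t:rank}, which depends on Lemmas~\ref{l:com} and~\ref{l:gen} to recover the factors $\left[g,i\right]$ and $\pi$ from the coprime-order products $\left[g,i\right]\pi$, and then on iterating Proposition~\ref{prop:gen} to rebuild the whole iterated wreath product.
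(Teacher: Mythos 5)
Your proposal is correct and follows the paper's own route exactly: telescoping the relative-rank identity of Proposition~\ref{prop:relative} via Theorem~\ref{thm:step} to get $\rk\left(\P\left(\n\right)\right)=\rk\left(\P_k\left(\n\right)\right)+k$ (this is Corollary~\ref{cor:relative}), then pinning $\rk\left(\P_k\left(\n\right)\right)=k$ between the lower bound of Proposition~\ref{prop:hom} and the upper bound of Theorem~\ref{t:rank}. Your extra remark that each $P_j$ restricts to a primitive predicate on $\P_{j-1}\left(\n\right)$ with truth set $\P_j\left(\n\right)$ is a worthwhile bookkeeping detail the paper leaves implicit.
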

\begin{proof}
	From  Theorem~\ref{t:rank} and Proposition~\ref{prop:hom} it follows
	that $\rk\left( \P_k\left( \n \right) \right) = k$. Now apply
	Corollary~\ref{cor:relative}. 
\end{proof}

\bibliography{np}

\providecommand{\bysame}{\leavevmode\hbox to3em{\hrulefill}\thinspace}
\providecommand{\MR}{\relax\ifhmode\unskip\space\fi MR }
\providecommand{\MRhref}[2]{%
  \href{http://www.ams.org/mathscinet-getitem?mr=#1}{#2}
}
\providecommand{\href}[2]{#2}
\begin{thebibliography}{1}

\bibitem{j}
Jo{\~a}o Ara{\'u}jo and Csaba Schneider, \emph{The rank of the endomorphism
  monoid of a uniform partition}, Semigroup Forum \textbf{78} (2009), no.~3,
  498--510. \MR{MR2511780 (2010d:20069)}

\end{thebibliography}
\bibliographystyle{amsplain}

\end{document}